\renewcommand{\hat}{\widehat}
\renewcommand{\tilde}{\widetilde}
\renewcommand{\hat}{\widehat}
\renewcommand{\tilde}{\widetilde}
\renewcommand{\bar}{\overline}
\newcommand{\pfloor}[1]{\lfloor #1 \rfloor}
\newcommand{\pceiling}[1]{\lceil #1 \rceil}
\newcommand{\A}{\mathbf{A}}
\newcommand{\F}{\mathbf{F}}
\newcommand{\Q}{\mathbf{Q}}
\newcommand{\Z}{\mathbf{Z}}
\newcommand{\R}{\mathbf{R}}
\newcommand{\cS}{\mathcal{S}}
\newcommand{\Tr}{\mathrm{Tr}}
\newcommand{\ord}{\mathrm{ord}}
\newcommand{\NP }{\mathrm{NP}}
\newcommand{\HP }{\mathrm{HP}}
\newcommand{\GNP }{\mathrm{GNP}}
\newcommand{\sgn}{\mathrm{sgn}}
\newcommand{\Zeta}{\mathrm{Zeta}}
\newcommand{\cont}{\mathrm{cont}}
\newcommand{\MaxPrime}{\mathrm{MaxPrime}}
\theoremstyle{plain}
\newtheorem{theorem}{Theorem}[section]
\newtheorem{proposition}[theorem]{Proposition}
\newtheorem{lemma}[theorem]{Lemma}
\newtheorem{corollary}[theorem]{Corollary}
\newtheorem{conjecture}[theorem]{Conjecture}
\theoremstyle{definition}
\newtheorem{remark}[theorem]{Remark}
\newtheorem{acknowledgments}{Acknowledgments}
\begin{document}

\title[Generic $A$-family of exponential sums]
{Generic $A$-family of exponential sums}

\subjclass[2000]{11,14}

\author{Hui June Zhu}
\address{
Department of mathematics,
State University of New York at Buffalo,
Buffalo, NY 14260}
\email{hjzhu@math.buffalo.edu}
\date{\today}

\begin{abstract}
Let $\vec{s}:=(s_1,s_2,\ldots,s_m)$ with 
$s_1<\cdots<s_m$ being positive integers.
Let $\A(\vec{s})$ be the space of all 1-variable polynomials 
$f(x) = \sum_{\ell=1}^{m}a_\ell x^{s_\ell}$ parametered by coefficients 
$\vec{a}=(a_1,\ldots,a_m)$ with $a_m\neq 0$.
We study the $p$-adic valuation of the roots of 
the $L$-function of exponential sum of $\bar{f}$ for mod $p$ reduction of any
closed point $f\in \A(\vec{s})(\bar\Q)$.
Let  $\NP(\bar{f})$ be the normalized $p$-adic Newton polygon of the $L$ function of exponential sums of $\bar{f}$. 
Let $\GNP(\A(\vec{s}),\bar\F_p)$ 
be the generic Newton polygon for $\A(\vec{s})$ over $\bar\F_p$,
and let $\HP(\A(\vec{s})):=\NP_p(\prod_{i=1}^{d-1}(1-p^{\frac{i}{d}}T))$
be the absolute lower bound of $\NP(\A(\vec{s}))$.
One knows that $\NP(\bar{f})\prec \GNP(\A(\vec{s});\bar\F_p)\prec\HP(\A(\vec{s}))$ for 
all prime $p$, and equalities hold when $p\equiv 1\bmod d$.
The equality does not generally hold for other residues.
In the case $\vec{s}=(s,d)$ with $s<d$ coprime we 
provide a computational method to determine 
$\GNP(\A(s,d),\bar\F_p)$ explicitly
by constructing its generating polynomial
$H_r\in\Q[X_{r,1},X_{r,2},\cdots, X_{r,d-1}]$ 
for each residue class $p\equiv r\bmod d$.
For $p\equiv r\bmod d$ large enough 
$H_r\neq 0$ has its lowest degree (nonzero) terms
$\sum_{n=1}^{d-1}h_{r,n,k_{r,n}}X_{r,n}^{k_{r,n}}$ if and only if
$\GNP(\A(s,d),\bar\F_p)$ has 
its breaking points after the origin   
$$(n, \frac{n(n+1)}{2d}+\frac{(1-\frac{s}{d})k_{r,n}}{p-1})$$
where $n=1,2,\cdots,d-1$.
If $a\neq 0$ then for any $f=x^d+ax^s\in \A(s,d)(\bar\Q)$ and 
for any prime $p\equiv r\bmod d$  large enough  
we have that $\NP(\bar{f}) = \GNP(\A(s,d);\bar\F_p)$ 
and 
$$
\lim_{\stackrel{p\equiv r\bmod d}{p\rightarrow\infty}}\NP(\bar{f}) = \HP(\A(s,d)).
$$

Our method applies to compute the generic Newton polygon of 
Artin-Schreier family 
$y^p-y=x^d+ax^s$ parametered by $a$ for $p$ large enough.
\end{abstract}

\maketitle

%\tableofcontents

\section{Introduction}

Let $\vec{s}:=(s_1,s_2,\ldots,s_m)$ with 
$s_1<\cdots<s_m$ being positive integers.
Let $\A(\vec{s})$ be the space of all 1-variable polynomials 
$f(x) = \sum_{\ell=1}^{m}a_\ell x^{s_\ell}$ parametered by coefficients 
$\vec{a}=(a_1,\ldots,a_m)$ with $a_m\neq 0$. Without loss of generality 
we set $a_m=1$.
Fix a primitive $p$-th root of unity $\zeta_p$.
Let $f=\sum_{\ell=1}^{m}a_\ell x^{s_\ell} \in \A(\vec{s})(\bar\Q)$ be a closed point, that is, $\vec{a}\in\bar\Q^m$.
Let $\wp$ be a prime ideal in the number field $\Q(a_1,\ldots,a_m)$ 
lying over $p$, suppose its 
residue field is $\F_q$ for some $p$-power $q$. 
For any $k\in\Z_{\geq 1}$ 
let the $k$-th exponential sum of $\bar{f}:=f\bmod \wp$ in $\F_q[x]$ be 
$$
S_k(\bar{f})=\sum_{x\in\F_{q^k}}\zeta_p^{\Tr_{\F_q/\F_p}(\bar{f}(x))}
$$
and let the $L$ function of the exponential sum of $\bar{f}/\F_q$ to be
$$
L(\bar{f}/\F_q;T) = \exp\sum_{k=1}^{\infty} S_k(\bar{f}) T^k/k.
$$
It is known that 
$L(\bar{f}/\F_q;T) =\sum_{i=0}^{d-1}c_i T^i$
lies in $\Z[\zeta_p][T]$ with $c_0=1$.
The (normalized) $p$-adic Newton polygon of $L(\bar{f}/\F_q;T)$
denoted by $\NP(\bar{f}):=\NP_q(L(\bar{f}/\F_q;T))$.
that is, the lower convex hull of 
the points $(i,\ord_pc_i/(\log_p q))$ for $i=0,1,\ldots, d-1$
in the real plane $\R^2$. 
In other words, it is the $q$-adic 
Newton polygon as 
for any $c\in\bar\Q$ we write $\ord_q(c)=\ord_p(c)/(\log_pq)$.
Consider all Newton polygons with the same domain as piece-wise 
linear functions, we define an order $\NP_1\prec \NP_2$ if $\NP_1$
lies over $\NP_2$. For each prime $p$, 
there exists a lower bound for $\NP(\bar{f})$ by 
the Grothendieck-Katz specialization theorem (see \cite{Katz})
for all such Newton polygons, namely
$$\GNP(\A(\vec{s});\bar\F_p):=\inf_{\bar{f}\in \A(\vec{s})(\bar\F_p)}\NP(\bar{f})$$ exists.
The infimum is taken over all Newton polygons $\NP(\bar{f})$
as $\bar{f}\in \A(\vec{s})(\bar\F_p)$ with the partial order described above.

In this paper we shall always represent 
a Newton polygon 
by its breaking points coordinates in $\R^2$ after origin.
Let
$$
\HP(\A(\vec{s})):=\NP_p(\prod_{i=1}^{d-1}(1-p^{\frac{i}{d}}T)).
$$
In the literature $\HP(\A(\vec{s}))$ is often called the Hodge polygon
of $\A(\vec{s})$, and its breaking points after origin are
$(n,\frac{n(n+1)}{2d})$ for $n=1,\ldots,d-1$.
It is known that 
\begin{eqnarray}\label{E:Katz}
\NP(\bar{f})\prec
&\GNP(\A(\vec{s});\bar\F_p) &
\prec \HP(\A(\vec{s}))
\end{eqnarray}
and their endpoints coincide (see \cite{AS89}).
In fact this inequality holds for more general families of Laurent polynomials
in multivariables (see for instance \cite{AS89}). 
For $p\equiv 1\bmod d$ we have all three polygons coincide,
but it is not the case for other residue classes of the prime $p$.
In fact, $\GNP$ generally depends on not only the residue class of $p$ but 
also $p$ itself, and from experimental data for lower degree cases
one observes that $\GNP$ has a formula for certain residue families,
and we prove this in this paper and give explicit formulas.

For $\vec{s}=(1,2,\ldots,d)$,
Wan has conjectured that a generic polynomial of degree $d$
in $\A(\vec{s})(\bar\Q)$ has  
its Newton polygon at each mod $p$ reduction 
approaching to the absolute lower bound $\HP(\A(\vec{s}))$ as $p$ goes to infinity. 
This conjecture was proved in \cite{Zhu03} where it is also proved that 
Wan's conjecture applies to a 1-parameter family $\A(1,d)$.
In this paper we generalize a main theorem of \cite{Zhu03}
from $\A(1,d)$ to $\A(s,d)$. Our major contribution of the current paper is 
to provide an explicit method allowing one to compute 
$\GNP(\A(s,d),\bar\F_p)$ for every prime $p$ large enough, 
we are also developing this method for more general families in the future
paper. We prove in this paper the generic Newton polygon at each prime $p$
may be computed globally over $\Q$ instead, and for $p$ large enough it
has a formula depending only on the residue of $p\bmod d$.

For any $a$ in $\bar\Q$ we use $\MaxPrime(a)$ to denote 
the maximal prime factor of $N_{\Q(a)/\Q}(a)$ in $\Q$.
Let $\MaxPrime(a_1,a_2,\ldots)$ be the maximum of these
$\MaxPrime(a_i)$'s.

For any $2\leq r\leq d-1$ coprime to $d$, 
we construct a generating polynomial
$H_r\in\Q[X_{r,1},\ldots,X_{r,d-1}]$ 
for $\GNP(\A(s,d),\bar\F_p)$ in Section 2, see (\ref{E:H}).
Key result of this paper lies in the following theorem:

\begin{theorem}\label{T:main}
Let $s<d$ be coprime positive integers.
Suppose $H_r\neq 0$ with lowest degree (nonzero) terms
$\sum_{n=1}^{d-1}h_{r,n,k_{r,n}}X_{r,n}^{k_{r,n}}$.
Let $N_{s,d,r}:=\max(s(d-1),d+\max_n(k_{r,n}), 2(d-s)\max_n(k_{r,n}),\MaxPrime_n(h_{r,n,k_{r,n}}))$.
Then for every prime $p\equiv r \bmod d$ and $p>N_{s,d,r}$, 
we have $\GNP(\A(s,d),\bar\F_p)$ with breaking points after the origin at
$(n,\frac{n(n+1)}{2d}+\frac{(1-\frac{s}{d})k_{r,n}}{p-1})$ 
for $n=1,\ldots,d-1$.
Conversely, suppose
$\GNP(\A(s,d),\bar\F_p)$ has its breaking points after the origin 
$(n,\frac{n(n+1)}{2d}+\frac{(1-\frac{s}{d})k_{r,n}}{p-1})$ for all $n$
for all prime $p>\max(s(d-1),d+\max(k_{r,n}),2(d-s)\max_n(k_{r,n}))$
then 
$H_r\neq 0$ with lowest degree (nonzero) terms
$\sum_{n=1}^{d-1}h_{r,n,k_{r,n}}X_{r,n}^{k_{r,n}}$.

Let $f=x^d+ax^s\in\A(s,d)(\bar\Q)$ and 
we write $\bar{f}$ its reduction mod a prime in $\Q(a)$ over $p$.
Suppose $a\neq 0$.
Then for all prime $p\equiv r\bmod d$ and 
$p > \max(N_{s,d,r},\MaxPrime(a))$ we have 
\begin{eqnarray*}
\NP(\bar{f}) &=&\GNP(\A(s,d);\bar\F_p)\\
\lim_{\stackrel{p\equiv r\bmod d}{p\rightarrow\infty}}
\NP(\bar{f}) &=&\HP(\A(s,d)).
\end{eqnarray*}
\end{theorem}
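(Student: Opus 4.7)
The plan is to translate the statement into $p$-adic linear algebra using Dwork's trace formula and the Adolphson--Sperber cohomological approach, and then read everything off the generating polynomial $H_r$ built in Section~2. After choosing the Dwork splitting function and an Artin--Hasse-exponential normalization, one expresses $L(\bar f/\F_q;T)^{\pm 1}$ as the characteristic polynomial of a completely continuous operator $\alpha$ on a $p$-adic Banach space with a natural monomial basis indexed (in the appropriate truncation) by $\{0,1,\ldots,d-1\}$. Under this dictionary, the $n$-th breaking point of $\NP(\bar f)$ is controlled by the $p$-adic valuation of the $n\times n$ principal minor $\det M_n$ of the matrix of $\alpha$ in this basis. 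The polynomial $H_r\in\Q[X_{r,1},\ldots,X_{r,d-1}]$ constructed in Section~2 is designed so that the coefficient of the monomial $X_{r,n}^{k}$ records the universal $\Q$-coefficient of the term of $\det M_n$ of $p$-adic valuation $\frac{n(n+1)}{2d}+\frac{(1-s/d)k}{p-1}$, across the residue class $p\equiv r\bmod d$.

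For the forward direction, assume $H_r\neq0$ with lowest-degree nonzero term $h_{r,n,k_{r,n}}X_{r,n}^{k_{r,n}}$ for each $n$. I would prove two facts. First, the hypothesis $p>\MaxPrime_n(h_{r,n,k_{r,n}})$ forces $h_{r,n,k_{r,n}}\not\equiv 0\pmod p$, so the candidate ``leading'' term of $\det M_n$ survives modulo $p$. Second, the size conditions $p>s(d-1)$, $p>d+\max k_{r,n}$, and $p>2(d-s)\max k_{r,n}$ are precisely what is needed to show that every other summand contributing to $\det M_n$ (coming from higher terms of the Artin--Hasse series, from wrap-around in the Banach basis, and from $(d-s)$-spaced ``offset'' contributions) carries strictly larger $p$-adic valuation. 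Combining the two, the minor $\det M_n$ has exact valuation $\frac{n(n+1)}{2d}+\frac{(1-s/d)k_{r,n}}{p-1}$ at a generic point of $\A(s,d)$, which by lower-semicontinuity of the Newton polygon gives the claimed value of $\GNP(\A(s,d);\bar\F_p)$ at $x=n$. The converse direction reverses this reasoning: if $\GNP(\A(s,d);\bar\F_p)$ has the asserted breaking points for every sufficiently large $p\equiv r\pmod d$, then the universal leading term of $\det M_n$ must have $p$-adic valuation matching the formula for infinitely many primes, which, since $H_r$ is a polynomial over $\Q$, forces its lowest-degree terms in each $X_{r,n}$ to be exactly $h_{r,n,k_{r,n}}X_{r,n}^{k_{r,n}}$.

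For the statement about $f=x^d+ax^s$ with $a\neq0$, the condition $p>\MaxPrime(a)$ makes $a$ a $p$-adic unit in $\bar\Q_p$; evaluating the lowest-degree term of $H_r$ at the specialization $X_{r,n}\mapsto a^{e_n}$ (with $e_n$ determined by the construction in Section~2) then yields a nonzero residue, so no accidental cancellation takes place at this particular $\vec a$. Therefore $\NP(\bar f)$ has the same breaking points as $\GNP(\A(s,d);\bar\F_p)$, giving the equality. The limit $\lim_{p\equiv r,\,p\to\infty}\NP(\bar f)=\HP(\A(s,d))$ is then immediate from the explicit formula, since the correction $\frac{(1-s/d)k_{r,n}}{p-1}$ tends to $0$ while the main term $\frac{n(n+1)}{2d}$ is the height of $\HP(\A(s,d))$ at $x=n$. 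The main obstacle I expect is step (ii) above: rigorously bounding the contribution of all non-leading terms in the Dwork matrix expansion against a single threshold in $p$. The construction of $H_r$ is engineered to reduce everything to a finite list of inequalities among valuations, but establishing each of these inequalities for $p>N_{s,d,r}$ requires combinatorial control over the Artin--Hasse coefficients and over the carry behavior of base-$p$ expansions in the residue class $r$, and this is where the method has to be executed with care.
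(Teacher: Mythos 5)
Your outline follows the same broad route as the paper (Dwork trace formula, principal minors of the Frobenius matrix, and the generating polynomial $H_r$ keeping track of the leading valuations), but there is a genuine gap at the central step. You assert that ``the $n$-th breaking point of $\NP(\bar f)$ is controlled by the $p$-adic valuation of the $n\times n$ principal minor $\det M_n$.'' This is not automatic. First, $\det(1-TM)$ has $n$-th coefficient equal to the \emph{sum over all} $n\times n$ principal minors of $M$, not just the upper-left one; you need the weight filtration $\ord_p F''_{pi-j}\ge i/d$ (obtained by twisting by $\gamma^{j-i}$) to show the upper-left block dominates. Second, for $q=p^c$ with $c>1$ the relevant operator is the semilinear product $M''\cdot(M'')^{\tau^{-1}}\cdots(M'')^{\tau^{-(c-1)}}$, and passing from the valuations of the minors of $M''$ alone to $\NP(\bar f)$ is exactly the content of the transform theorem of \cite{Zhu12}, which the paper invokes and you do not. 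That theorem requires the two-sided estimate
\[
\frac{n(n+1)}{2d}\le \ord_p P_n''(\hat a)<\frac{n(n+1)+1}{2d},
\]
and the upper bound is precisely what the hypothesis $p>2(d-s)\max_n(k_{r,n})$ guarantees. You instead fold all three size conditions into a single claim that ``every other summand carries strictly larger $p$-adic valuation,'' which both misattributes the role of the $2(d-s)$-condition and leaves the finite-to-infinite passage (from the finite tame determinant $P_{n,\ell_b^o}$ to the full Fredholm minor $P_n'$, handled in the paper by Lemma~\ref{L:approx}) unaddressed. Without supplying the weight filtration, the approximation lemma, and the transform theorem, the argument does not close; these are not routine estimates but the essential bridge between ``$H_r$ has a nonzero lowest term'' and ``$\GNP$ has these break points.''

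Two smaller remarks. Your statement that lower-semicontinuity ``gives the claimed value of $\GNP$'' is too quick: Grothendieck--Katz gives existence of the infimum, while computing it needs both the uniform lower bound over all $f\in\A(s,d)(\bar\F_p)$ and an explicit $f$ attaining it, which the paper gets simultaneously from Lemma~\ref{L:approx} and Lemma~\ref{L:monomial}. Also, for the specialization to $f=x^d+ax^s$, the paper uses $\hat a$, the Teichm\"uller lift of $\bar a$, rather than some power $a^{e_n}$; the nonvanishing at a concrete $a\ne 0$ with $p>\MaxPrime(a)$ is what makes $\alpha_{r,n,k_{r,n}}\hat a^{k_{r,n}}$ a unit and hence removes any accidental cancellation. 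The limit statement at the end you handle correctly.
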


If $s<d$ are not coprime, then the statements relating to $\HP(\A(s,d))$
in Theorem \ref{T:main} are false. However, there exists 
$\GNP(\A(s,d),\bar\F_p)$ in that case and the situation was carried out in 
\cite{BFZ08}.

As a byproduct we show that 
the generic Newton polygon $\GNP(\A(s,d),\bar\F_p)$ for $p\equiv r\bmod d$ 
and $p$ is large enough has a formula. We shall also see that 
each of these generic Newton polygons can be achieved over 
$\F_p$.

For our family $\A(s,d)$ we construct a semi-linear Fredholm $A$-matrix $M'$
which represents Dwork's Frobenius matrix over $\F_p$.
The $L$ function of a closed special  point $\bar{f}\in\A(s,d)(\F_q)$
with $q=p^c$ is determined by the Fredholm $A$-matrix
$M'_c:=M'\cdot (M')^{-\tau}\cdots \cdot (M')^{-\tau^{c-1}}$
where $\tau$ is the Frobenius map.
However, this infinite matrix is notoriously messy to 
compute if one ever can, and furthermore $c$ can be arbitrarily large
and this changes the corresponding $L$-function fundamentally.
Meanwhile, the Fredholm determinant of $M'_c$ also depends on 
the prime $p$ intricately. Our method here is: 
we first work out complete solution set to the Frobenius problem
in $2$-dimensional case 
(it is not yet known one can explicitly compute all 
such complete solution set for $>2$-dimensional cases).
Then for $p$ large enough we approximate 
our Fredholm $A$-matrix by a finite one.
This finite Fredholm $A$-matrix can be explicitly written down,
and most remarkably its $p$-adic order has a formula 
for each residue of $p\bmod d$.
We prove in this paper that the generic $A$-families over $\bar\F_p$ 
for $p$ large enough are all the imagines of a 
global generic object over $\bar\Q$.

Our theorem has application to Artin-Schreier families.
For any $f=x^d+ax^s\in\A(s,d)(\bar\Q)$ 
let $X_f: y^p-y= f(x)$ mod $\wp$
be the corresponding mod $p$ reduction over 
some finite field $\F_q$. 
It is known 
that the Zeta function $\Zeta(X_f/\F_q;T)$ of $X_f/\F_q$ in variable $T$
lies in $\Q[T]$ and its numerator (as the core factor)
is a polynomial of degree $(d-1)(p-1)$.
In fact it is known that 
$$
\Zeta(X_f/\F_q;T) = \frac{N_{\Q(\zeta_p)/\Q}(L(\bar{f}/\F_q;T))}{(1-T)(1-qT)}
$$
where the norm being defined
as the product of all Galois conjugates
of the polynomial $L(\bar{f}/\F_q;T)\in\Q[\zeta_p][T]$
where the automorphism of $\Q(\zeta_p)/\Q$ 
acts trivially on the variable $T$.
Let the Newton polygon 
$\NP(X_f/\F_q)$
of $X_f/\F_q$ 
be the $q$-adic Newton polygon of the numerator of $\Zeta(X_f/\F_q;T)$.
Thus $\NP(\bar{f}/\F_q)$ is precisely equal to 
$\NP(X_f/\F_q)$ shunk by a factor of $p-1$ horizontally and vertically,
that we denote this by 
$
\NP(\bar{f}/\F_q) = \NP(X_f/\F_q)/(p-1).
$
Then the following geometric application 
is an immediate corollary of Theorem \ref{T:main}.

\begin{corollary}\label{C:main}
Let $H_r\in\Q[X_{r,1},\ldots,X_{r,d-1}]$ be the generating polynomial 
constructed in Theorem \ref{T:main}.
Suppose 
$H_r=\sum_{n=1}^{d-1}h_{r,n,k_{r,n}}X_{r,n}^{k_{r,n}}+(\mbox{higher terms})$
for every $2\leq r\leq d-1$.
If $a\neq 0$ then for any 
$f=x^d+ax^s$ in $\A(s,d)(\bar\Q)$ 
and for any prime $p$  large enough we have 
$\frac{\NP(X_f/\F_q)}{p-1} = \GNP(\A(s,d),\bar\F_p)$ 
whose breaking points after origin are
$(n,\frac{n(n+1)}{2d}+\frac{(1-\frac{s}{d})k_{r,n}}{p-1})$ for $n=1,\ldots,d-1$,
and 
$$
\lim_{p\rightarrow \infty}\frac{\NP(X_f/\F_q)}{p-1} = \HP(\A(s,d)).
$$
\end{corollary}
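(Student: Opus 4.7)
The plan is to translate Theorem~\ref{T:main} into the Artin-Schreier setting via the two identities recalled just above the statement:
\[
\Zeta(X_f/\F_q;T) = \frac{N_{\Q(\zeta_p)/\Q}(L(\bar{f}/\F_q;T))}{(1-T)(1-qT)}, \qquad \NP(\bar{f}/\F_q) = \frac{\NP(X_f/\F_q)}{p-1}.
\]
The hypothesis that $H_r$ has the prescribed lowest-degree terms for every residue $2 \leq r \leq d-1$ coprime to $d$ is exactly what is needed to invoke Theorem~\ref{T:main} uniformly in the residue class of $p \bmod d$, provided that $p$ exceeds the maximum of the constants $N_{s,d,r}$ taken over these finitely many residues, together with $\MaxPrime(a)$.

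First I would fix such a prime $p$ and apply Theorem~\ref{T:main} to $f = x^d + a x^s$ to obtain $\NP(\bar{f}) = \GNP(\A(s,d);\bar\F_p)$ with breaking points at $(n, \frac{n(n+1)}{2d} + \frac{(1-s/d)k_{r,n}}{p-1})$ for $n = 1, \ldots, d-1$, where $r$ denotes the residue of $p \bmod d$. Substituting into the scaling identity $\NP(\bar{f}/\F_q) = \NP(X_f/\F_q)/(p-1)$ then yields $\NP(X_f/\F_q)/(p-1) = \GNP(\A(s,d);\bar\F_p)$ with the same breaking points, which is the first half of the corollary. The scaling identity itself ultimately rests on the fact that $p$ is totally ramified in $\Q(\zeta_p)$, so every element of $\Gal(\Q(\zeta_p)/\Q)$ preserves the $p$-adic valuation on $\Z[\zeta_p]$; the $p-1$ Galois conjugates of $L(\bar{f}/\F_q;T)$ share a common Newton polygon, and multiplication stacks each slope $p-1$ times, scaling both axes by $p-1$ — but this is already asserted in the excerpt, so no fresh work is required.

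For the limit statement, I would observe that the correction term $\frac{(1-s/d)k_{r,n}}{p-1}$ at each breaking point tends to $0$ as $p \to \infty$, uniformly over the finite set of residues $r$ coprime to $d$, since every $k_{r,n}$ is a fixed constant depending only on $s$ and $d$. Consequently, every residue-class subsequence of $\GNP(\A(s,d);\bar\F_p)$ converges to the same Hodge polygon $\HP(\A(s,d))$ with breaking points $(n, \frac{n(n+1)}{2d})$, and therefore the unrestricted limit $p \to \infty$ of $\NP(X_f/\F_q)/(p-1)$ exists and equals $\HP(\A(s,d))$. The main obstacle has already been absorbed into Theorem~\ref{T:main} and the two displayed identities, which is why the paper presents this statement as an immediate corollary; the only thing to verify beyond those is the uniform convergence over residue classes, and this is routine because the residue set is finite and the shift is $O(1/p)$.
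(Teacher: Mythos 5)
Your proposal is correct and follows the same route the paper intends: the paper explicitly labels this an immediate corollary of Theorem~\ref{T:main}, relying on the displayed norm formula for $\Zeta(X_f/\F_q;T)$ and the scaling identity $\NP(\bar{f}/\F_q)=\NP(X_f/\F_q)/(p-1)$, and your argument invokes exactly these together with the uniform bound $p>\max_r(N_{s,d,r},\MaxPrime(a))$ over the finitely many residue classes. The observation that the correction terms are $O(1/p)$ uniformly in $r$ and hence the limit exists without restricting to a residue class is the only extra point to make, and you handle it correctly.
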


This paper is organized as follows.
We first have some preliminary preparation in Section 2  and 
define generating polynomials $H_r$ for every $2\leq r\leq d-1$.
These polynomials in $\Q[X_{r,1},\ldots,X_{r,d-1}]$ 
depend only on $s,d$ and $r$ essentially. 
In fact, the most technical procedure in this 
paper is the construction of these global ($p$-free!) generating polynomials
that are linked to $p$-adic Fredholm determinant of the Frobenius
for all primes $p$ large enough.
Section 3 provides the bridges between these global polynomials $H_r$ 
over $\Q$ 
and the $p$-adic local analysis, especially under the condition that 
$p$ is large enough.
Section 4 develops Dwork theory for our 1-parameter 
$a$-family $\A(s,d)(\bar\F_p)$ for $p$ large enough.
We prove our main result Theorem \ref{T:main} in Section 4.

\begin{acknowledgments}
Research in this paper was partially supported by NSA 
mathematical science research grant 1094132-1-57192.
\end{acknowledgments}

\section{Frobenius problem and generating polynomials for $\GNP$}

\subsection{Preliminaries}

In this section we develop combinatorial and number theoretic 
preparations for our main theorem. These two lemmas are elementary
yet essential in the arguments of this paper.

\begin{lemma}\label{L:basic2}
Let $1\leq r<d$ be two coprime positive integers.
Let $h(z)$ be a fixed nonzero polynomial in $\bar\Q[z]$.
Then $h(-\frac{r}{d})\neq 0$ if and only if 
for all large enough prime 
$p\equiv r\bmod d$  we have $h(\pfloor{\frac{p}{d}})\in \bar\Z_p^*$.
\end{lemma}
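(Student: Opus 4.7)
The plan is to reduce both directions to the congruence
$h(\lfloor p/d\rfloor)\equiv h(-r/d)\pmod{p}$ in $\bar\Z_p$, valid for all sufficiently large primes $p\equiv r\bmod d$. First, since $p\equiv r\bmod d$, one has $\lfloor p/d\rfloor=(p-r)/d$. For $p$ coprime to $d$ (automatic once $p>d$), the element $1/d$ is a unit in $\Z_p$, so $(p-r)/d = p/d - r/d\equiv -r/d\pmod{p}$ inside $\bar\Z_p$. Writing $h(z)=\sum_{i}c_iz^i$ with finitely many coefficients $c_i\in\bar\Q$, these coefficients lie in $\bar\Z_p$ for all but finitely many primes $p$ (the bad primes being those in the support of the denominators of the $c_i$). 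For such $p$, plugging in and expanding yields
\[
h\Bigl(\tfrac{p-r}{d}\Bigr)\equiv h\Bigl(-\tfrac{r}{d}\Bigr)\pmod{p}
\]
inside $\bar\Z_p$.

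For the forward direction, assume $h(-r/d)\neq 0$. Then $h(-r/d)$ is a fixed nonzero algebraic number; in particular its norm from $\bar\Q$ down to $\Q$ (i.e.\ the norm from the number field generated by the coefficients of $h$) is a fixed nonzero rational, divisible by only finitely many rational primes. For any $p$ outside this finite bad set and satisfying the integrality conditions above, $h(-r/d)$ is a $p$-adic unit. Combined with the congruence, $h((p-r)/d)\in\bar\Z_p^{*}$ for all sufficiently large $p\equiv r\bmod d$.

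For the converse, I argue by contrapositive: if $h(-r/d)=0$, factor $h(z)=(z+r/d)^{k}g(z)$ in $\bar\Q[z]$ with $g(-r/d)\neq 0$ and $k\geq 1$. Then
\[
h\Bigl(\tfrac{p-r}{d}\Bigr)=\Bigl(\tfrac{p}{d}\Bigr)^{k}g\Bigl(\tfrac{p-r}{d}\Bigr).
\]
Applying the forward direction already proved to $g$, for $p\equiv r\bmod d$ large enough the factor $g((p-r)/d)$ is a $p$-adic unit, while $(p/d)^{k}$ has $p$-adic valuation exactly $k>0$. Hence $h((p-r)/d)$ is not a $p$-adic unit for any sufficiently large prime $p\equiv r\bmod d$, which contradicts the hypothesis of the converse statement.

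There is no serious obstacle: the argument is essentially the statement that a nonzero algebraic number has $p$-adic valuation zero for all but finitely many $p$, together with the Taylor-type congruence $h(x+p\cdot u)\equiv h(x)\pmod{p}$ for $u\in\bar\Z_p$. The only care needed is to keep track of the finite set of excluded primes, namely those dividing $d$, those appearing in denominators of coefficients of $h$, and those dividing the norm of $h(-r/d)$ (respectively of $g(-r/d)$ in the converse direction).
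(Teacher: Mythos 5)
Your proof is correct and follows essentially the same route as the paper: establish the congruence $h(\lfloor p/d\rfloor)\equiv h(-r/d)\pmod p$ in $\bar\Z_p$ (from $\lfloor p/d\rfloor=(p-r)/d\equiv -r/d$), and observe that a fixed nonzero algebraic number is a $p$-adic unit for all but finitely many $p$. The only cosmetic difference is the factorization $h(z)=(z+r/d)^k g(z)$ in your converse, which is an unnecessary detour --- the congruence already gives $h(\lfloor p/d\rfloor)\equiv 0\pmod p$ directly when $h(-r/d)=0$, which is how the paper treats that direction.
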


\begin{proof}
For all prime $p$ large enough we have
$h(z)\in\bar\Z_p[z]$ obviously.
For such $p$ notice that $p\nmid d$, so we have
$\pfloor{\frac{p}{d}}\equiv -\frac{r}{d}\bmod p$ and hence 
$h(\pfloor{\frac{p}{d}})\in\bar\Z_p^*$ if and only if $h(-\frac{r}{d})\in\bar\Z_p^*$.

If $\theta:=h(-\frac{r}{d})\in\bar\Q^*$
then it is clear that $h(-\frac{r}{d})\in\bar\Z_p^*$ for all $p>\MaxPrime(\theta)$.
That is $h(\pfloor{\frac{p}{d}}) \in \bar\Z_p^*$ for all such $p$. 
The converse is clear.
\end{proof}

When $h(z)$ lies in $\Z[z]$ we have the following lemma
that yields an effective bound for $p$.
For any $h\in\bar\Q[z]$ let $h^o:=h/\cont(h)$ where $\cont(h)$ is the content of the polynomial $h$.

\begin{lemma}\label{L:basic}
Let $1\leq r\leq d-1$ for integers $d\in\Z_{>1}$.
Let $h(z)\in\Z[z]$. 

(1) If $h(-\frac{r}{d})\neq 0$ then 
$dz_0+r\nmid h(z_0)$ for all integers $z_0 \geq d^{\deg(h)-1}|h(-\frac{r}{d})|$.

(2) Suppose prime $p\nmid \cont(h)$ and $p>d$.
If $h(-\frac{r}{d})\neq 0$ then  
$h(\pfloor{\frac{p}{d}})\in\Z_p^*$ for all 
$p>\MaxPrime(h^o(-\frac{r}{d}))$;
conversely, if $h(\pfloor{\frac{p}{d}})\in\Z_p^*$ 
for any prime $p$, then $h(-\frac{r}{d})\neq 0$.
\end{lemma}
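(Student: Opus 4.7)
The plan is to reduce both parts of the lemma to a single polynomial identity with integer coefficients,
$$d^{n}h(z) \;=\; d^{n}h(-\tfrac{r}{d}) \;+\; (dz+r)\,G(z), \qquad G(z)\in\Z[z],$$
where $n:=\deg h$. I would derive this by expanding $h(z)-h(-r/d)=\sum_{i=1}^{n}a_i(z^i-(-r/d)^i)$, factoring $(z+r/d)$ out of each term in the usual way, and then clearing the resulting denominators (which divide $d^{n-1}$) by multiplying the whole equation by $d^n$ and absorbing one factor of $d$ into $d(z+r/d)=dz+r$. Since $h\in\Z[z]$, the constant term $d^n h(-r/d)$ is automatically an integer.

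For part (1), I specialize to $z=z_0\in\Z$. The hypothesis $(dz_0+r)\mid h(z_0)$ then implies $(dz_0+r)\mid d^n h(z_0)$, and the identity converts this to the integer divisibility $(dz_0+r)\mid d^n h(-r/d)$. If $h(-r/d)\neq 0$ the right-hand side is a nonzero integer, giving the size estimate $|dz_0+r|\leq d^n|h(-r/d)|$. Combined with $r\geq 1$, the assumption $z_0\geq d^{n-1}|h(-r/d)|$ yields $dz_0+r>d^n|h(-r/d)|$, the required contradiction.

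For part (2), the point is that the specialization $z_0=\pfloor{p/d}$ (with $p\equiv r\bmod d$) makes $dz_0+r=p$ exactly, so the identity reduces modulo $p$ to $d^n h(\pfloor{p/d})\equiv d^n h(-r/d)\pmod{p}$. Using $p>d$ to invert $d^n$ in $\Z_p$, this gives the congruence $h(\pfloor{p/d})\equiv h(-r/d)\pmod{p}$ in $\Z_p$. The unit statement then reduces to controlling the $p$-adic valuation of $h(-r/d)$: writing $h=\cont(h)\cdot h^o$, the assumption $p\nmid\cont(h)$ puts $\cont(h)$ in $\Z_p^*$, and the denominator of $h^o(-r/d)$ divides a power of $d$ hence is coprime to $p$, so $h(-r/d)\in\Z_p^*$ iff $p\nmid\num(h^o(-r/d))$, which is guaranteed by $p>\MaxPrime(h^o(-r/d))$. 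The converse is the immediate contrapositive: if $h(-r/d)=0$, the congruence forces $p\mid h(\pfloor{p/d})$, so $h(\pfloor{p/d})\notin\Z_p^*$.

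The only technical wrinkle is verifying that $G(z)\in\Z[z]$ by carefully tracking the factors $d^{n-1-j}(-r)^j$ accompanying each monomial in the expansion of the difference quotient; once this integrality is in hand, both parts collapse to a single divisibility argument, and part (2) is essentially the principle that evaluation at $\pfloor{p/d}$ is the mod-$p$ shadow of evaluation at the rational point $-r/d$.
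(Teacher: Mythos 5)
Your proof is correct and follows essentially the same route as the paper's: both arguments hinge on the integer polynomial identity $d^n h(z) = d^n h(-\tfrac{r}{d}) + (dz+r)G(z)$ with $G\in\Z[z]$ (the paper obtains $G=d^n g$ by long division and a coefficient recursion rather than by factoring $z^i-(-r/d)^i$, but this is a cosmetic difference), specialize at $z_0$ resp.\ $z_0=\pfloor{p/d}$ where $dz_0+r$ becomes $p$, and then extract the size bound resp.\ the $p$-adic unit criterion; your recasting of part (2) as the congruence $h(\pfloor{p/d})\equiv h(-\tfrac{r}{d})\bmod p$ in $\Z_p$ is merely a cleaner phrasing of the paper's integer-divisibility argument $d^m h^o(-\tfrac{r}{d}) = p\cdot(\text{integer})$.
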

\begin{proof}
(1) Without loss of generality we assume $h(z)$ has its leading coefficient
$>0$. Taking long division algorithm in $\Q[z]$ we have
$h(z)=(dz+r)g(z)+R$ for unique 
$R=h(-\frac{r}{d})\in\Q$ and unique $g(z)\in\Q[z]$ with leading coefficient $>0$.
Suppose for $z_0\in\Z_{>0}$ we have $h(z_0)=(dz_0+r)C$ for some nonzero integer $C$ depending on $z_0$ of course. 
Then we have 
\begin{eqnarray*}
h(-\frac{r}{d}) &=& (dz_0+r)(C-g(z_0)).
\end{eqnarray*}
Let $h(z)=\sum_{i=0}^{m}h_iz^i$ for $h_i\in\Z$ and write $g(z)=\sum_{i=0}^{m-1}
g_iz^i$, then we have $g_{m-1}=h_m/d$ and $g_{i-1}=(h_i-r g_i)/d$ for all $i$.
Hence we have $d^m g_i\in \Z$ for all $i$.
Rewrite the above equation  below
\begin{eqnarray*}
d^mh(-\frac{r}{d}) &=& (dz_0+r)(d^mC-d^mg(z_0)).
\end{eqnarray*}
Since the left-hand-side is a fixed integer, and the factor
$d^m C-d^mg(z_0)$ is also an integer, 
we have that 
$d z_0 +r\leq d^m|h(-\frac{r}{d})|$. 
This says that 
if $dz_0+r > d^m|h(-\frac{r}{d})|$ or equivalently 
$z_0\geq d^{m-1} |h(-\frac{r}{d})|$,
then we have $dz_0+r \nmid h(z_0)$.

(2) 
Write $c=\cont(h)$.
Assume that prime $p=dz_0+r$ 
is coprime to $c$ with $z_0:=\pfloor{p/d}$.
Then $C=cC^o$ for some $C^o\in\Z$. 
Write $g=cg^o$ for $g^o\in \Q[x]$, we have
$h^o(z)=(dz+r)g^o(z)+h^o(-\frac{r}{d}) \in\Z[z]$
implies that $d^mg^o(z)\in\Z[z]$.
We have 
\begin{eqnarray*}
d^mh^o(-\frac{r}{d}) &=& p (d^mC^o-d^mg^o(z_0)).
\end{eqnarray*}
Since the left-hand-side is a fixed integer, and the factor
$d^m C^o-d^mg^o(z_0)$ is also an integer, 
we have that 
$p\leq \MaxPrime(h^o(-\frac{r}{d}))$.
This says that 
if prime $p >\MaxPrime(h^o(-\frac{r}{d}))$
then we have $p \nmid h^o(\pfloor{\frac{p}{d}})$, i.e.
$p\nmid h(\pfloor{\frac{p}{d}})$. 
The converse is clear since 
$\pfloor{\frac{p}{d}}\equiv -\frac{r}{d}\bmod p$ 
for $p>d$.
\end{proof}

Below we shall study solutions 
to the Frobenius problem with given two coprime integers.
We shall fix two coprime positive integers $d,s$ with $d>s$.
Every pair $(m,n)$ with $dn+sm=v$ is called a solution to 
the Frobenius problem of $(s,d)$ in this paper.
For any nonnegative integers $v>ds-d-s$ let 
$\beta_v(d,s):=\min(m+n)$ where the minimum is taking over 
all nonnegative integers $m,n$ such that $dn+sm=v$.
Such minimum $\beta_v(d,s)$ exists and is achieved uniquely at 
$m=(s^{-1}v\bmod d)$ and 
$n=\frac{v}{d}-\frac{sm}{d}$.
The following lemma should be known in the literature but we 
provide its statement and proof here for the paper to be self-contained.

\begin{lemma}\label{L:Frobenius}
Let $v=pi-j$ with $1\leq i,j\leq d-1$
and let $v>ds-d-s+1$ (or $p>s(d-1)$). Let $r=(p\bmod d)$.
\begin{enumerate}
\item Then the minimum is achieved uniquely
$\beta_{pi-j}(s,d) =m_{ij}+ n_{ij}$ at
\begin{eqnarray*}
m_{ij} &=& (s^{-1}(ri-j)\bmod d)\\
n_{ij} &=& \frac{pi-j}{d} -\frac{sm_{ij}}{d}=\pfloor{\frac{p}{d}}i+\frac{ri-j-sm_{ij}}{d}.
\end{eqnarray*}
\item We have
$\beta_{pi-j} (s,d)= \frac{pi-j}{d}+(1-\frac{s}{d})m_{ij}
\geq \pceiling{\frac{pi-j}{d}}
\geq \pfloor{\frac{pi}{d}}.
$ 
\item 
We have
$0\leq m_{ij}\leq d-1$
and 
$
\pfloor{\frac{pi}{d}} - s+1\leq n_{ij}\leq \pfloor{\frac{pi}{d}}.
$
\item A general solution to this Frobenius problem is 
$$
n_{ij}^\ell:=n_{ij}-s\ell,\quad m_{ij}^\ell:=m_{ij}+d\ell
$$
for some $0\leq \ell\leq \pfloor{\frac{pi-j}{ds}}$.
(The minimum $\beta$ is achieved if and only if $\ell=0$.)
The sum of these solutions is 
$$
m_{ij}^\ell+n_{ij}^\ell = \beta_{pi-j}(s,d) +(d-s)\ell.
$$
\end{enumerate}
\end{lemma}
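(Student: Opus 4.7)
The plan is to treat the lemma as a direct combinatorial analysis of the two-generator Frobenius problem $dn+sm=pi-j$ with $\gcd(s,d)=1$ and $p>s(d-1)$, the classical Sylvester--Frobenius threshold ensuring that nonnegative solutions exist at all. The unifying device is reduction modulo $d$: since $p\equiv r\pmod d$ and $s$ is a unit mod $d$, every integer solution satisfies $m\equiv s^{-1}(ri-j)\pmod d$, so $m$ is pinned down modulo $d$.

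For part (1) I would substitute $n=(pi-j-sm)/d$ to obtain $m+n=(pi-j)/d+(1-s/d)m$, which is strictly increasing in $m$ because $d>s$. Hence the minimum over nonnegative solutions is uniquely attained at the smallest nonnegative representative of the residue class, namely $m_{ij}:=s^{-1}(ri-j)\bmod d\in[0,d-1]$; the alternative expression for $n_{ij}$ comes out of writing $p=d\pfloor{p/d}+r$. For part (2), the closed form $\beta_{pi-j}(s,d)=(pi-j)/d+(1-s/d)m_{ij}$ is immediate from this substitution; since $\beta$ is an integer at least $(pi-j)/d$ we get $\beta\ge\pceiling{(pi-j)/d}$, and the final inequality $\pceiling{(pi-j)/d}\ge\pfloor{pi/d}$ is a short case check after writing $pi=d\pfloor{pi/d}+\rho$ with $\rho=ri\bmod d\in[1,d-1]$ (nonzero because $\gcd(p,d)=1$ and $i<d$).

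For part (3), the bounds on $m_{ij}$ are by construction. For $n_{ij}$ I would write $n_{ij}=\pfloor{pi/d}-L$ where $L:=(sm_{ij}-\rho+j)/d$; divisibility by $d$ is exactly the defining congruence of $m_{ij}$, and $L\ge 0$ (equivalently $n_{ij}\le\pfloor{pi/d}$) follows because $sm_{ij}$ is a nonnegative representative of $(ri-j)\bmod d$, forcing $sm_{ij}\ge\rho-j$. The matching upper bound on $L$, which gives the stated lower bound on $n_{ij}$, comes from $sm_{ij}\le s(d-1)$ combined with the ranges of $\rho$ and $j$. For part (4), any two integer solutions of $dn+sm=pi-j$ differ by an element of the kernel of $(n,m)\mapsto dn+sm$, which is the rank-one lattice generated by $(-s,d)$; hence general solutions take the form $(m_{ij}+d\ell,n_{ij}-s\ell)$. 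Minimality of $m_{ij}$ in its residue class forces $\ell\ge 0$, nonnegativity of the $n$-coordinate gives the claimed upper bound on $\ell$, and the sum formula drops out by collecting terms.

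The only mildly delicate step is the lower bound in part (3): one must track the extremal configurations of $sm_{ij}$, $\rho$ and $j$ together and exploit $\gcd(s,d)=1$ to squeeze $L$ into the right range; everywhere else the argument is routine arithmetic that uses the hypothesis $p>s(d-1)$ only to guarantee that the candidate $n_{ij}$ is in fact nonnegative.
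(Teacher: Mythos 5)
Your overall plan matches the paper's own proof almost exactly: substitute $n=(v-sm)/d$ to write $m+n = v/d + (1-\tfrac{s}{d})m$, observe this is increasing in $m$ because $d>s$, so the minimum is at the least nonnegative residue $m_{ij}=(s^{-1}(ri-j)\bmod d)$, and use the hypothesis on $v$ (equivalently on $p$) to force $n_{ij}\ge 0$. The paper then dispatches parts (2)--(4) with the single line ``the rest of the statements follow from the definition,'' so your more explicit treatment of (2) via the ceiling/floor case check and of (4) via the kernel lattice $\Z\cdot(-s,d)$ is at least as complete as the source.

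The place you rightly flag as delicate, however, does not come out as stated. Writing $\rho:=ri\bmod d\in[1,d-1]$ and $L:=\pfloor{pi/d}-n_{ij}=(sm_{ij}+j-\rho)/d$, the extremal bounds $sm_{ij}\le s(d-1)$, $j\le d-1$, $\rho\ge 1$ only give $L\le\pfloor{((s+1)(d-1)-1)/d}=s$, not $s-1$, and $L=s$ genuinely occurs: with $(s,d,r)=(2,5,3)$, $(i,j)=(4,4)$ and $p=13$ one gets $m_{ij}=4$, $n_{ij}=8$, $\pfloor{pi/d}=10$, so $n_{ij}=\pfloor{pi/d}-s=8<9=\pfloor{pi/d}-s+1$. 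Thus the inequality $\pfloor{pi/d}-s+1\le n_{ij}$ in part (3) is off by one as printed; the correct (and attainable) lower bound is $n_{ij}\ge\pfloor{pi/d}-s$, which is exactly what the paper itself uses later in Lemma~\ref{L:bound}(2), where $t_{ij}^o=L\le s$ is proved. So this is a defect in the statement rather than a gap you could have closed by tracking extremal configurations more carefully; you should not expect $\gcd(s,d)=1$ to rule out the case $L=s$.
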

\begin{proof}
We prove our statements for general integer $v>0$ 
first as the specialization to $v=pi-j$ does not alter the argument.
It follows from that $d>s$ that 
this minimum of $m_v+n_v$ is uniquely achieved when $m_v$ is minimal.
Let $m_v:= (s^{-1}v\bmod d)$ be the least nonnegative residue mod $d$.
It is clear that $m_v$ is the minimal nonnegative solution possible to the equation
$dn_v+sm_v=v$.
Let $n_v:=(v-sm_v)/d$. Since $v>ds-d-s+1$, we have 
$v>(d-1)(s-1)\geq m_v(s-1)$. Thus $v-s m_v > -m_v \geq -(d-1)$.
Since $n_v$ is an integer with $n_v>-(d-1)/d$ and hence $n_v\geq 0$. 
Therefore, $m_v,n_v$ are nonnegative integers satisfying the equation
$dn_v+sm_v=v$. 
The rest of the statements follow from the definition.
\end{proof}

Observe from Lemma \ref{L:Frobenius} that 
matrix $(m_{ij})$ is bounded in each entry by $d-1$, and 
it varies and exhausts the residue class on each row and each column.
Its value depends on $r=(p\bmod d)$.
On the other hand, 
each $n_{ij}$ lies in the small neighborhood of $\frac{pi}{d}$,
and hence it increases as $p$ increases, but each $n_{ij}^\ell<p$ for 
all $1\leq i,j\leq d-1$.

\subsection{Generating polynomials for $\GNP$}

The goal of this subsection is to define
the generating polynomials $H_r$ in $\Q[X_{r,1},\ldots,X_{r,d-1}]$ for every 
residue $2\leq r\leq d-1$ for given $s,d,r$.
This subsection is a dry run.
The readers who seek motivation should read Section 4 first.

The case for $r=1$ is known hence we will omit it entirely, in fact 
one can also write $H_1 =1$ for completeness.
The idea is that the generic $A$-determinant in the focus of our 
study depends only on the residue $r=(p\bmod d)$, not on $p$ itself. 
There is a generating polynomial for the generic $A$-family 
whose lowest degree terms encode the information of $\GNP(\A(s,d),\bar\F_p)$.

%For all $1\leq i,j\leq d-1$ we define
%a linear function 
%$$
%\tilde{n}_{ij}(z):=iz+\frac{ri-j-sm_{ij}}{d}
%$$ 
%in variable $z$,
%notice that $n_{ij}=\tilde{n}_{ij}(\pfloor{\frac{p}{d}})$.

From now we fix $r,s$ with 
$2\leq r\leq d-1$ is coprime to $d$ and $1\leq n\leq d-1$.
For each $1\leq i\leq d-1$ we define a linear function in variable $z$
\begin{eqnarray}\label{E:n}
\tilde{n}_{r,i,+}(z) &:= & iz+\pfloor{\frac{ri}{d}}.
\end{eqnarray}

For any positive integer $t$ we denote the $t$-th {\em falling factorial power} 
of $Y$ by $[Y]_t:=Y(Y-1)\cdots(Y-t+1)$, where $Y$ lies in any ring containing $\Z$.
Below our $Y$ is either a rational number or a rational function in $\Q[z]$.
For $1\leq i,j\leq d-1$ 
recall $m_{ij}=(s^{-1}(ri-j)\bmod d)$ from Lemma \ref{L:Frobenius}
and let 
\begin{eqnarray}\label{E:t}
t_{ij} &:=& \pfloor{\frac{ri}{d}}-\frac{ri-j-sm_{ij}}{d}+s\ell_{ij}.
\end{eqnarray}
Let 
\begin{eqnarray}\label{E:k^o}
k_{r,n}^o&:=&\min_{\sigma\in S_n}\sum_{i=1}^{n} m_{i,\sigma(i)}.
\end{eqnarray}
For any $k\geq k_{r,n}^o$ let $\cS(k)$ be the set of all 
$\sigma\in S_n$ and $\ell_{i,\sigma(i)}\in\Z_{\geq 0}$
such that 
$$
\sum_{i=1}^{n}m_{i,\sigma(i)} +d\sum_{i=1}^{n}\ell_{i,\sigma(i)}=k.
$$
Suppose $\cS(k)$ is not empty, then for each  $(\sigma,\ell_{i,\sigma(i)})$ lies in 
$\cS(k)$ we define
\begin{eqnarray*}
\Theta_n &:=& \prod_{i=1}^{n}
   \frac{(d-1+(k-k_{r,n}^o))!}{(m_{i,\sigma(i)}+d\ell_{i,\sigma(i)})! }.
\end{eqnarray*}
Then we define a polynomial in variable $z$: 
\begin{eqnarray}\label{E:h_n}
\tilde{h}_{r,n,k}(z) &:=&
\sum_{(\sigma,\ell_{i,\sigma(i)})\in \cS(k)}\sgn(\sigma)\Theta_n
\prod_{i=1}^{n}[\tilde{n}_{r,i,+}(z)]_{t_{i,\sigma(i)}}.
\end{eqnarray}
If $\cS(k)$ is empty, define $\tilde{h}_{r,n,k}(z):=0$.

We remark that 
in practise it is not necessary to compute $k_{r,n}^o$
as one can replace $\Theta_n$ by 
$$
\Theta'_n:=\prod_{i=1}^{n} \frac{(d-1+k) !}{(m_{i,\sigma(i)}+d\ell_{i,\sigma(i)})!},
$$
and 
define the $\tilde{h}'_{r,n,k}(z)$ as in (\ref{E:h_n}) accordingly.
The following proposition 
shows that this replacement only change the function upto a constant factor.
Its proof follows immediately from the very definition
in (\ref{E:h_n}) and hence we omit.

\begin{proposition}\label{P:b}
Let notation be as above. 
Then 
$$\tilde{h}'_{r,n,k}(z) = \left(\frac{(d-1+k)!}{(d-1+k-k_{r,n}^o)!}\right)^n\tilde{h}_{r,n,k'}(z).$$
\end{proposition}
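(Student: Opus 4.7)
The plan rests on the observation that, with the natural reading $k'=k$ on the right-hand side (the displayed $k'$ appears to me to be a typographical slip: no genuinely shifted index would yield a scalar-multiple relation, since the summation index sets $\cS(k)$ and $\cS(k')$ would then differ), both $\tilde h'_{r,n,k}$ and $\tilde h_{r,n,k}$ are defined by the same formula (\ref{E:h_n}) over the same index set $\cS(k)$, and carry the same signed falling-factorial product $\sgn(\sigma)\prod_{i=1}^{n}[\tilde n_{r,i,+}(z)]_{t_{i,\sigma(i)}}$ inside the sum. The sole discrepancy between the two definitions is the combinatorial weight: $\Theta'_n$ versus $\Theta_n$.

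The computation I would carry out is the term-by-term ratio, for fixed $(\sigma,\ell_{i,\sigma(i)})\in \cS(k)$:
$$
\frac{\Theta'_n}{\Theta_n}
= \prod_{i=1}^{n} \frac{(d-1+k)!/(m_{i,\sigma(i)}+d\ell_{i,\sigma(i)})!}{(d-1+(k-k_{r,n}^o))!/(m_{i,\sigma(i)}+d\ell_{i,\sigma(i)})!}
= \left(\frac{(d-1+k)!}{(d-1+k-k_{r,n}^o)!}\right)^n,
$$
in which the denominators $(m_{i,\sigma(i)}+d\ell_{i,\sigma(i)})!$ cancel exactly and no residual dependence on $\sigma$ or on the $\ell_{i,\sigma(i)}$ survives. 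Because this ratio is constant across the sum, it pulls out of $\sum_{(\sigma,\ell)\in\cS(k)}\sgn(\sigma)\Theta'_n\prod_{i}[\tilde n_{r,i,+}(z)]_{t_{i,\sigma(i)}}$, and what remains is exactly the sum defining $\tilde h_{r,n,k}$. The identity follows.

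The edge case $k<k_{r,n}^o$ is immediate from the convention that $\tilde h_{r,n,k}=0$ whenever $\cS(k)$ is empty, which applies equally to $\tilde h'_{r,n,k}$, so both sides vanish simultaneously. There is no real obstacle in the argument — it is a one-step bookkeeping exercise that hinges on the fact that the only $k$-dependent factor in $\Theta_n$ and $\Theta'_n$ is the uniform numerator, independent of the summation variables. The only thing worth flagging is the apparent $k'\mapsto k$ typographical ambiguity in the displayed right-hand side: any reading other than $k'=k$ would destroy the shared index set $\cS(k)$ that makes the proof a mere extraction of a constant factor.
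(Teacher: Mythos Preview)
Your argument is correct and matches the paper's approach: the paper itself omits the proof, stating only that it ``follows immediately from the very definition in (\ref{E:h_n}),'' which is precisely the term-by-term ratio computation you carry out. Your observation that $k'$ should be read as $k$ is also the natural reading.
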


Define
\begin{eqnarray}\label{E:h^o}
\tilde{h}_{r,n,k}^o(z) &:=& \tilde{h}_{r,n,k}(z)/\cont(\tilde{h}_{r,n,k}(z)).
\end{eqnarray}
By Proposition \ref{P:b} this function is well defined, independent of the choice
of $\Theta_n$.

\begin{lemma}\label{L:bound}
Let notation be as above.  
Fix $2\leq r\leq d-1$ coprime to $d$ and $1\leq n\leq d-1$.
\begin{enumerate}
\item 
If $(\sigma,\ell_{i,\sigma(i)})$ lies in $\cS(k)$ with $b:=k-k_{r,n}^o\geq 0$,
then 
$\ell_{i,\sigma(i)}\leq \ell_b^o:=\pfloor{b/d}$.
\item Then $0\le t_{ij}\leq s(b+1)$ is an integer for all $1\leq i,j\leq d-1$
\item 
We have $\tilde{h}_{r,n,k}(z)\in \Z[z]$. 
Furthermore, $\tilde{h}_{r,n,k}^o(z)$ depends only on $d,s,r,b$
and $\deg(\tilde{h}_{r,n,k}^o(z)) \leq ns(b+1)$.
\end{enumerate}
\end{lemma}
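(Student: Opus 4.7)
The plan is to handle the three parts in order, with each bootstrapping from the previous. The lemma is purely combinatorial: no $p$-adic machinery enters, only the Frobenius data already set up in Lemma \ref{L:Frobenius}. Part (1) is immediate from the minimality defining $k_{r,n}^o$. Any $(\sigma, \ell_{i,\sigma(i)}) \in \cS(k)$ satisfies $\sum_i m_{i,\sigma(i)} + d \sum_i \ell_{i,\sigma(i)} = k$, and $\sum_i m_{i,\sigma(i)} \ge k_{r,n}^o$ by definition of $k_{r,n}^o$. Thus $d \sum_i \ell_{i,\sigma(i)} \le k - k_{r,n}^o = b$, and nonnegativity of each $\ell_{i,\sigma(i)}$ yields $\ell_{i,\sigma(i)} \le \pfloor{b/d} = \ell_b^o$.

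For Part (2), I would first verify integrality of $t_{ij}$: the defining relation $s m_{ij} \equiv ri - j \pmod d$ makes $(ri - j - s m_{ij})/d \in \Z$, so $t_{ij} \in \Z$. To bound $t_{ij}$, isolate the $\ell$-free piece $u_{ij} := \pfloor{ri/d} - (ri - j - s m_{ij})/d$ and rewrite it as $u_{ij} = (j + s m_{ij} - (ri \bmod d))/d$. Since this is an integer multiple of $d$, a short elementary estimate using $1 \le j \le d-1$, $0 \le m_{ij} \le d-1$, $0 \le (ri \bmod d) \le d-1$, and $d > s$ shows $0 \le u_{ij} \le s$. (As a cross-check, substituting $p = d \pfloor{p/d} + r$ identifies $u_{ij}$ with $\pfloor{pi/d} - n_{ij}$, to which Lemma \ref{L:Frobenius}(3) gives the same confinement in $[0, s-1]$ or $[0, s]$.) Adding $s \ell_{ij}$ and invoking Part (1) yields $0 \le t_{ij} \le s + s \pfloor{b/d} \le s(b+1)$.

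For Part (3), integrality $\tilde{h}_{r,n,k}(z) \in \Z[z]$ splits into two checks. Each falling factorial $[\tilde{n}_{r,i,+}(z)]_{t_{i,\sigma(i)}}$ lies in $\Z[z]$, since $\tilde{n}_{r,i,+}(z) = iz + \pfloor{ri/d}$ has integer coefficients and $t_{i,\sigma(i)} \in \Z_{\ge 0}$ by Part (2). Each factor $(d-1+b)!/(m_{i,\sigma(i)} + d \ell_{i,\sigma(i)})!$ in $\Theta_n$ is an integer, because Part (1) combined with $m_{i,\sigma(i)} \le d-1$ gives $m_{i,\sigma(i)} + d \ell_{i,\sigma(i)} \le (d-1) + b$. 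The degree bound then follows from $\deg \prod_i [\tilde{n}_{r,i,+}(z)]_{t_{i,\sigma(i)}} = \sum_i t_{i,\sigma(i)} \le n s(b+1)$ together with $\deg \tilde{h}^o_{r,n,k} = \deg \tilde{h}_{r,n,k}$. Finally, the dependence claim is transparent from the definitions: $m_{ij}$, $\ell_{i,\sigma(i)}$, $t_{ij}$, and $\tilde{n}_{r,i,+}$ involve only $r, s, d$ and $k = k_{r,n}^o + b$, with no $p$ present; and Proposition \ref{P:b} ensures that the normalization $h \mapsto h^o$ absorbs the only scaling ambiguity in the choice of $\Theta_n$.

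The main obstacle is Part (2), specifically the elementary but slightly delicate bound $0 \le u_{ij} \le s$; once this is in hand, the rest of the lemma is careful bookkeeping of the integrality of multinomial-type coefficients and of falling factorials of integer-coefficient linear forms.
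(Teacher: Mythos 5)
Your proof is correct and follows essentially the same route as the paper: Part (1) from minimality of $k_{r,n}^o$, Part (2) by first checking integrality of $t_{ij}^o$ via $sm_{ij}\equiv ri-j\bmod d$ and then bounding the $\ell$-free part by $s$ (the paper writes the quantity as $\pfloor{ri/d}-(ri-j-sm_{ij})/d$ and bounds it directly rather than through your equivalent form $(j+sm_{ij}-(ri\bmod d))/d$, but the arithmetic is identical), and Part (3) by observing that $m_{i,\sigma(i)}+d\ell_{i,\sigma(i)}\leq d-1+b$ makes each factor of $\Theta_n$ a falling factorial power and hence an integer. Your cross-check identifying $u_{ij}$ with $\pfloor{pi/d}-n_{ij}$ via Lemma \ref{L:Frobenius}(3) is a pleasant sanity check not present in the paper but consistent with it.
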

\begin{proof} %%% proof
(1). Since $k=\sum_{i=1}^{n}m_{i,\sigma(i)}+d\sum_{i=1}^{n}\ell_{i,\sigma(i)}$
we have 
$d\sum_{i=1}^{n}\ell_{i,\sigma(i)}<b$.
Hence $\ell_{i,\sigma(i)}\leq \ell_b^o$.

(2).
Combining the result in Part (1) 
it remains to show that 
$t_{ij}^o:= \pfloor{\frac{ri}{d}}-\frac{ri-j-sm_{ij}}{d}$ satisfies
that $0\leq t_{ij}^o\leq s$. 
Since $m_{ij}=(s^{-1}(ri-j)\bmod d)$ and $\gcd(s,d)=1$
we have 
$sm_{ij}\equiv ri-j\bmod d$.
Hence $\frac{sm_{ij}-ri+j}{d}\in\Z$ and so $t_{ij}^o\in\Z$.
By Lemma \ref{L:Frobenius} we have that 
$$
t_{ij}^o \leq \frac{j+sm_{ij}}{d}
\leq \frac{(d-1)+s(d-1)}{d}\leq 
\frac{(s+1)(d-1)}{d}$$
and hence 
$t_{ij}^o\leq s$ since $t_{ij}^o\in\Z$.
Notice that 
$$\pfloor{\frac{ri}{d}}d\geq \pfloor{\frac{ri-j}{d}}d=ri-j-(ri-j\bmod d)
\geq ri-j-sm_{ij}.$$
This proves that 
$\pfloor{\frac{ri}{d}} \geq \frac{ri-j-sm_{ij}}{d}$.
That is, $t_{ij}^o\geq 0$.

(3). 
Let $b:=k-k_{r,n}^o$.
Write 
$\delta_{i,\sigma(i)}:= 
(d-1+b)-(m_{i,\sigma(i)}+d\ell_{i,\sigma(i)})$,
then 
$$\delta_{i,\sigma(i)} = 
(d-1-m_{i,\sigma(i)})+(b-d\ell_{i,\sigma(i)}) \geq 0$$
by Lemma \ref{L:Frobenius}  and part (1) above.
Hence $\Theta_n$ 
is just the product of $\delta_{i,\sigma(i)}$-th falling factorial power of $(d-1+b)$
\begin{eqnarray*}
\Theta_n &=&\prod_{i=1}^{n}\frac{(d-1+b)!}{(m_{i,\sigma(i)}+d\ell_{i,\sigma(i)})!}\\
&=&\prod_{i=1}^{n}[d-1+b]_{\delta_{i,\sigma(i)}}\\
&=& \prod_{i=1}^{n}(d+b-1)(d+b-2)\cdots (d+b-1-\delta_{i,\sigma(i)}).
\end{eqnarray*}
It is clear that this is an integer depending only on
$d,s,r$ and $b$. 

On the other hand, 
the $t_{i,\sigma(i)}$-th falling factoring power 
of $\tilde{n}_{r,i,+}(z)$ is 
\begin{eqnarray*}
[\tilde{n}_{r,i,+}(z)]_{t_{i,\sigma(i)}}
&=& \tilde{n}_{r,i,+}(z)(\tilde{n}_{r,i,+}(z)-1)\cdots (\tilde{n}_{r,i,+}(z)-t_{i,\sigma(i)}+1)\\
&=& (iz+\pfloor{\frac{ri}{d}})
(iz+\pfloor{\frac{ri}{d}}-1)
\cdots (iz+\pfloor{\frac{ri}{d}}-
t_{i,\sigma(i)}+1).
\end{eqnarray*}
It lies in $\Z[z]$ of degree $t_{i,\sigma(i)}\leq s(b+1)$ (by Part (2)), and coefficients
is determined by $d,s,r,b$.
Thus by Proposition \ref{P:b}, 
$\tilde{h}^o_{r,n,k}(z)\in\Z[z]$ 
is of degree $\leq \max_{\sigma}\sum_{i=1}^{n}t_{i,\sigma(i)}$ and 
hence $\leq ns(b+1)$. 
\end{proof}

Fix $2\leq r\leq d-1$ and $1\leq n\leq d-1$.
Let $k$ range over integers $\geq k_{r,n}^o$ and compute
$
h_{r,n,k}:=\tilde{h}_{r,n,k}^o(-\frac{r}{d})
$
until $h_{r,n,k}\neq 0$ for some $k$ (if exists). 
Let $X$ be a variable, let 
\begin{eqnarray}\label{E:H_n}
H_{r,n}(X) &:=& \sum_{k_{r,n}^o\leq k< k_{r,n}^o+\infty}
h_{r,n,k} X^k,
\end{eqnarray}
Let 
\begin{eqnarray}\label{E:H}
H_r&:= & \sum_{n=1}^{d-1}H_{r,n}(X_{r,n}).
\end{eqnarray}
Notice that these two generating polynomials with coefficient in $\Q$, where
$H_r$ depends only on $d,s,r$ and $k$.

We are able to explicitly construct nonzero $H_r$ for 
all $d\leq 5$, namely the following conjecture is verified for 
all $d\leq 5$.

\begin{conjecture}\label{Conj:1}
Let $H_r$ be as defined in (\ref{E:H}) above.
For every $2\leq r\leq d-1$ coprime to $d$ 
we have $H_r\neq 0$ and its lowest degree 
term is of the form $\sum_{n=1}^{d-1}h_{r,n,k} X_{r,n}^k$ ($h_{r,n,k}\neq 0$)
for some bounded $k\in\Z_{\geq 0}$.
\end{conjecture}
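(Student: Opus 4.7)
The plan is to decouple the conjecture into independent claims for each $n$, exploiting that $H_r = \sum_{n=1}^{d-1} H_{r,n}(X_{r,n})$ is a sum of polynomials in disjoint variables; hence its lowest-degree nonzero terms automatically take the stated form once we show, for every $1 \le n \le d-1$, that there exists some $k$ (uniformly bounded in $s,d,r$) with $h_{r,n,k} \ne 0$ and that the smallest such $k$ is attained.

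For fixed $n$ I would first attempt $k = k_{r,n}^o$, the minimal admissible degree. The set $\cS(k_{r,n}^o)$ then consists of pairs $(\sigma,\vec{0})$ with $\sigma \in S_n$ minimizing $\sum_i m_{i,\sigma(i)}$. Evaluating at $z = -r/d$ gives $\tilde{n}_{r,i,+}(-r/d) = -(ri \bmod d)/d \in (-1,0)$, using $\gcd(r,d) = 1$ and $1 \le i \le d-1$. Each falling factorial $[\tilde{n}_{r,i,+}(-r/d)]_{t_{i,\sigma(i)}}$ is therefore a product of strictly negative non-integer rationals and hence nonzero. If the minimum is uniquely attained by some $\sigma^\ast$, this immediately gives $h_{r,n,k_{r,n}^o} \ne 0$ and we are done for this $n$.

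The delicate case is when several permutations tie for the minimum, so that the signed sum over $\cS(k_{r,n}^o)$ could in principle cancel. Here I would lift the problem to the polynomial level: view $\tilde{h}_{r,n,k_{r,n}^o}(z)$ as a polynomial in the formal variable $z$ and try to recognize it as an $n \times n$ determinant $\det\bigl([\tilde{n}_{r,i,+}(z)]_{t_{ij}}\bigr)_{1 \le i,j \le n}$, or at least as a signed sum with a distinguished leading monomial under a total order on $S_n$ refined by the tuple $(t_{i,\sigma(i)})_i$. A Jacobi--Trudi- or Vandermonde-style argument should then force non-vanishing as a polynomial in $z$; if it nevertheless vanishes at $z = -r/d$, one ascends to $k_{r,n}^o+1, k_{r,n}^o+2, \ldots$, each level contributing new $(\sigma,\vec{\ell})$ with $\vec{\ell} \ne \vec{0}$, and the degree bound of Lemma \ref{L:bound} controls how large the search needs to go.

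The main obstacle is ruling out systematic cancellation simultaneously across all ascending levels $k$, and producing a computable a priori bound on the smallest nonzero $k$. I expect this to require a structural identification of $\tilde{h}_{r,n,k}(z)$ with a classical combinatorial object --- a Schur-type determinant, a resultant, or a specialization of a symmetric function --- whose non-vanishing is standard. Absent such an identification one is forced into case analysis, which is precisely why the author verifies the conjecture only for $d \le 5$. A reasonable intermediate goal would be to prove the bounded-$k$ non-vanishing for $d$ sufficiently large in each fixed residue class $r$, via asymptotic analysis of the falling-factorial product once the dominant-permutation phenomenon is understood, leaving the remaining finitely many small $d$ to the author's explicit verification.
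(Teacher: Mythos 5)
You are attempting to prove what the paper explicitly labels a \emph{conjecture}: the paper offers no proof of this statement, only the remark immediately preceding it that it has been verified computationally for $d\le 5$. So there is no author's proof to compare against; what matters is whether your plan closes the gap, and it does not.

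Your preliminary reductions are sound. Since the $X_{r,n}$ are disjoint variables, $H_r\ne 0$ with lowest-degree terms of the stated shape does reduce to the per-$n$ claim that $h_{r,n,k}\ne 0$ for some bounded $k$. Your computation $\tilde{n}_{r,i,+}(-r/d)=\lfloor ri/d\rfloor-ri/d=-(ri\bmod d)/d\in(-1,0)$ is correct for $\gcd(r,d)=1$ and $1\le i\le d-1$, and it follows that each $[\tilde{n}_{r,i,+}(-r/d)]_{t_{i,\sigma(i)}}$ is a product of nonzero non-integer rationals. Hence when $\cS(k_{r,n}^o)$ is a singleton $(\sigma^*,\vec{0})$, the single summand is nonzero and $h_{r,n,k_{r,n}^o}\ne 0$.

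The genuine content of the conjecture is exactly the case you concede you cannot handle: when several $\sigma$ tie for $\min_\sigma\sum_i m_{i,\sigma(i)}$, the signed sum over $\cS(k)$ can cancel, and one must show non-cancellation is restored at some bounded $k$. This is not a phantom difficulty: Remark 4.4 of the paper exhibits $(s,d)=(2,5)$, $r=3$, where $H_3$ has least degree strictly above $k_{r,n}^o$ for at least one $n$, so cancellation at the bottom level really occurs. Your proposed fix --- recognizing $\tilde{h}_{r,n,k}(z)$ as $\det\bigl([\tilde{n}_{r,i,+}(z)]_{t_{ij}}\bigr)_{i,j}$ or some Jacobi--Trudi/Schur object --- also has a structural problem worth naming: the Leibniz expansion of that determinant sums over \emph{all} $\sigma\in S_n$, while $\tilde{h}_{r,n,k}(z)$ as defined in (\ref{E:h_n}) is a sum restricted to pairs $(\sigma,\ell_{i,\sigma(i)})\in\cS(k)$, i.e.\ those with $\sum_i m_{i,\sigma(i)}+d\sum_i\ell_{i,\sigma(i)}=k$. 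The $k$-stratification arises from extracting the coefficient of $(A\gamma^{1-s/d})^k$ in the scalar-variable determinant $P_{n,\ell_b^o}(A)$ (Lemma \ref{L:monomial}), and a coefficient of a determinant in one variable is not itself a determinant. So the object you would like to identify is the whole $P_n$, not an individual layer $\tilde{h}_{r,n,k}$, and a new idea is needed to control each layer separately --- precisely the open point. Your closing assessment is honest: without such a structural input, the statement remains, as in the paper, a conjecture rather than a theorem.
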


\begin{lemma}\label{L:kappa} %%%% L:kappa
Fix $1\leq n\le d-1$ and $2\leq r\leq d-1$ coprime to $d$.
Let $b=k-k_{r,n}^o\ge 0$.
\begin{enumerate}
\item
Define 
\begin{eqnarray*}
\kappa_{r,n,k} := ((d-1+b)!)^n\prod_{i=1}^{n} \pfloor{\frac{pi}{d}}!.
\end{eqnarray*}
Then we have
$\kappa_{r,n,k}\in \Z$; and 
$\kappa_{r,n,k}\in (\Z\cap\Z_p^*)$ 
for prime $p \ge d+b$.
\item
Define 
\begin{eqnarray*}
\alpha_{r,n,k} &:=& 
\sum_{(\sigma,\ell_{i,\sigma(i)})\in\cS(k)}
\sgn(\sigma) 
\prod_{i=1}^{n}\cfrac{1}{m_{i,\sigma(i)}^{\ell_{i,\sigma(i)}}!n_{i,\sigma(i)}^{\ell_{i,\sigma(i)}}!}.
\end{eqnarray*}
We have $\alpha_{r,n,k}\in \Q\cap\Z_p$ for all prime $p\equiv r\bmod d$ 
and $p\geq d+b$.
Furthermore, we have
$$\tilde{h}_{r,n,k}(\pfloor{\frac{p}{d}})
=\kappa_{r,n,k}\cdot \alpha_{r,n,k}.
$$
\end{enumerate}
\end{lemma}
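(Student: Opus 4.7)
The plan is to recognize that $(\kappa_{r,n,k}, \alpha_{r,n,k})$ is engineered so that evaluating $\tilde{h}_{r,n,k}(z)$ at $z=\pfloor{p/d}$ splits multiplicatively into a $p$-adic unit integer and a rational piece. Once this factorization identity is established, all integrality claims follow from Lemma \ref{L:bound}(3).

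First, I would dispose of Part (1). Both $((d-1+b)!)^n$ and each $\pfloor{pi/d}!$ are positive integers, so $\kappa_{r,n,k}\in\Z$. For the $p$-adic unit claim when $p\ge d+b$, note that the factors appearing in $(d-1+b)!$ are the integers $1,2,\ldots,d-1+b$, all bounded by $p-1$; and for $1\le i\le d-1$ we have $pi/d\le p(d-1)/d=p-p/d<p$, so every factor of $\pfloor{pi/d}!$ is $\le p-1$ as well. Neither factorial is divisible by $p$.

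The core of the proof is Part (2). Writing $p=dq+r$ with $q=\pfloor{p/d}$ gives $\pfloor{pi/d}=qi+\pfloor{ri/d}$, hence $\tilde{n}_{r,i,+}(q)=\pfloor{pi/d}$. The crucial algebraic identity to verify is
\begin{equation*}
t_{ij}=\pfloor{pi/d}-n_{ij}^{\ell_{ij}},
\end{equation*}
which follows by substituting $(ri-j-sm_{ij})/d=n_{ij}-qi$ from Lemma \ref{L:Frobenius}(1) into the definition (\ref{E:t}) of $t_{ij}$ and collapsing $n_{ij}-s\ell_{ij}=n_{ij}^{\ell_{ij}}$. With this identity each falling factorial power collapses to a ratio of factorials:
\begin{equation*}
[\tilde{n}_{r,i,+}(q)]_{t_{i,\sigma(i)}}=\frac{\pfloor{pi/d}!}{n_{i,\sigma(i)}^{\ell_{i,\sigma(i)}}!}.
\end{equation*}
Combined with $m_{ij}^{\ell_{ij}}=m_{ij}+d\ell_{ij}$, the factor $\Theta_n$ contributes the denominators $m_{i,\sigma(i)}^{\ell_{i,\sigma(i)}}!$ and the numerators $(d-1+b)!$. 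Substituting back into (\ref{E:h_n}) and pulling the $(\sigma,\ell)$-independent quantities $((d-1+b)!)^n\prod_i\pfloor{pi/d}!=\kappa_{r,n,k}$ out of the sum yields precisely $\tilde{h}_{r,n,k}(q)=\kappa_{r,n,k}\cdot\alpha_{r,n,k}$.

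Finally, the integrality of $\alpha_{r,n,k}$ at $p$ follows by an elementary cleanup: Lemma \ref{L:bound}(3) gives $\tilde{h}_{r,n,k}(q)\in\Z$, while Part (1) gives $\kappa_{r,n,k}\in\Z\cap\Z_p^*$, so the quotient $\alpha_{r,n,k}=\tilde{h}_{r,n,k}(q)/\kappa_{r,n,k}$ lies in $\Q\cap\Z_p$. The main obstacle is nothing conceptual but the bookkeeping behind the identity $t_{ij}=\pfloor{pi/d}-n_{ij}^{\ell_{ij}}$: one must carefully track how the shift parameter $\ell_{ij}$ simultaneously appears in $n_{ij}^{\ell_{ij}}=n_{ij}-s\ell_{ij}$ and in the $+s\ell_{ij}$ summand of $t_{ij}$, so that the two $s\ell_{ij}$ contributions cancel and leave the clean telescoping of the falling factorial into the factorial ratio.
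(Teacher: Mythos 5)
Your proposal is correct and follows essentially the same route as the paper: Part~(1) is identical, and for Part~(2) you establish the identity $t_{ij}=\pfloor{pi/d}-n_{ij}^{\ell_{ij}}$, collapse each falling factorial into a ratio of factorials, and pull $\kappa_{r,n,k}$ out of the sum --- exactly the paper's computation. The one small divergence is the final integrality step: the paper argues directly that $n_{ij}^{\ell}<p$ (Lemma~\ref{L:Frobenius}) and $m_{ij}^{\ell}\le d-1+d\pfloor{b/d}<p$ for $p\ge d+b$ (Lemma~\ref{L:bound}), so every factorial in the denominator of $\alpha_{r,n,k}$ is a $p$-adic unit term by term; you instead derive $\alpha_{r,n,k}\in\Q\cap\Z_p$ from the already-proved factorization $\tilde{h}_{r,n,k}(\pfloor{p/d})=\kappa_{r,n,k}\alpha_{r,n,k}$ together with $\tilde{h}_{r,n,k}(\pfloor{p/d})\in\Z$ (Lemma~\ref{L:bound}(3)) and $\kappa_{r,n,k}\in\Z\cap\Z_p^*$. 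Both are valid; your version is a slightly shorter cleanup, while the paper's version also records the termwise unit facts $m_{ij}^\ell!,n_{ij}^\ell!\in\Z_p^*$, which it reuses elsewhere.
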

\begin{proof}
(1) 
It is clear that $\kappa_{r,n,k}\in\Z$.
Since $n\leq d-1$ we have $\pfloor{pi/d}\leq p-1$
and hence $\pfloor{\frac{pi}{d}} \in\Z_p^*$ for all $p$.
On the other hand, $d-1+b<p$ by our hypothesis and 
hence $(d-1+b)!\in\Z_p^*$ too.
Hence $\kappa_{r,n,k}\in\Z_p^*$ for $p\geq b+d$.

(2) 
We first observe that 
$\tilde{n}_{r,i,+}(\pfloor{\frac{p}{d}})=i\pfloor{\frac{p}{d}}+\pfloor{\frac{ri}{d}}
=\pfloor{\frac{pi}{d}}$ (by writing $p=\pfloor{\frac{p}{d}}d+r$).
Secondly we notice that for all $i,j$
$$t_{ij}=\tilde{n}_{r,i,+}(\pfloor{\frac{p}{d}})-n_{ij}^{\ell_{ij}}= \pfloor{\frac{pi}{d}}-n_{ij}^{\ell_{ij}}.$$ 
Thus we have
$$
[\tilde{n}_{r,i,+}(\pfloor{\frac{p}{d}})]_{t_{i,\sigma(i)}}
=\frac{\tilde{n}_{r,i,+}(\pfloor{p/d})!}{n_{i,\sigma(i)}^{\ell_{i,\sigma(i)}}!}
=\frac{\pfloor{pi/d}!}{n_{i,\sigma(i)}^{\ell_{i,\sigma(i)}}!}.
$$
Therefore
\begin{eqnarray*}
\tilde{h}_{r,n,k}(\pfloor{\frac{p}{d}})
&=&\kappa_{r,n,k}
\sum_{(\sigma,\ell_{i,\sigma(i)})\in\cS(k)} 
\sgn(\sigma)\prod_{i=1}^{n}
\frac{1}{m_{i,\sigma(i)}^{\ell_{i,\sigma(i)}}!
n_{i\sigma(i)}^{\ell_{i,\sigma(i)}}!}
\end{eqnarray*}
which proves our statement.

Since $n_{ij}^\ell<p$ by Lemma \ref{L:Frobenius} 
and hence $n_{ij}^\ell!$ is in $\Z_p^*$. 
By Lemma \ref{L:bound}
we have $\ell_{i,\sigma(i)}\leq \pfloor{\frac{b}{d}}$ and 
hence $m_{ij}^\ell=m_{ij}+d\ell\leq d-1+d\pfloor{\frac{b}{d}}<p$ 
for $p\geq d+b$
and it follows $m_{ij}^\ell !  \in\Z_p^*$.
Thus  $\alpha_{r,n,k}\in\Z_p$ for $p\geq d+b$.
\end{proof}

\begin{proposition}\label{P:A^k}
Fix $r,n$ as above. 
\begin{enumerate}
\item Then 
$H_{r,n}=h_{r,n,k}X^k+(\mbox{higher terms})$ if and only if 
$k\geq 0$ is the least such that $\tilde{h}_{r,n,k}^o(-\frac{r}{d})\neq 0$.

\item If 
$\tilde{h}_{r,n,k}^o(\pfloor{\frac{p}{d}}) \in\Z_p^*$
for all prime $p\equiv r\bmod d$ and $p>\max(d,\MaxPrime(h_{r,n,k}))$
then $\tilde{h}_{r,n,k}^o(-\frac{r}{d})\neq 0$.
Conversely
if $\tilde{h}_{r,n,k}^o(-\frac{r}{d})\neq 0$ for $p\equiv r\bmod d$ and $p>d$ then 
$\tilde{h}_{r,n,k}^o(\pfloor{\frac{p}{d}}) \in\Z_p^*$.

\item 
For any prime $p\equiv r\bmod d$ and $p>d+k$ we have 
$\tilde{h}_{r,n,k}^o(\pfloor{\frac{p}{d}}) \in\Z_p^*$
if and only if 
$\alpha_{r,n,k}\in\Z_p^*$.

\item
If $H_{r,n}=h_{r,n,k}X^k+(\mbox{higher terms})$
then $k\geq 0$ is the least such that $\alpha_{r,n,k}\in\Z_p^*$ for all 
prime $p\equiv r\bmod d$ and $p>d+k$.
Conversely, 
if $k\geq 0$ is the least such that $\alpha_{r,n,k}\in\Z_p^*$ for all 
prime $p\equiv r\bmod d$ and $p>\max(d+k,\MaxPrime(h_{r,n,k}))$ then 
we have
$H_{r,n}=h_{r,n,k}X^k+(\mbox{higher terms})$.
\end{enumerate}
\end{proposition}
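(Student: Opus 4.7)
The plan is to derive the four parts in order, with Parts~(1)--(3) giving three equivalent characterizations of the lowest-degree nonzero term of $H_{r,n}$ and Part~(4) emerging as their direct composition. Part~(1) will be immediate from definition~(\ref{E:H_n}): the coefficient of $X^k$ in $H_{r,n}$ is exactly $h_{r,n,k} = \tilde{h}_{r,n,k}^o(-r/d)$, so the lowest nonzero coefficient sits at the smallest $k \geq 0$ with $\tilde{h}_{r,n,k}^o(-r/d) \neq 0$, and nothing beyond unfolding notation is required.

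For Part~(2) I would apply Lemma~\ref{L:basic}(2) to $h(z) := \tilde{h}_{r,n,k}^o(z)$, which lies in $\Z[z]$ and is primitive by Lemma~\ref{L:bound}(3) combined with definition~(\ref{E:h^o}). Since $\pfloor{p/d} \equiv -r/d \bmod p$ whenever $p \equiv r \bmod d$ and $p > d$, that lemma supplies both directions: the forward implication from $p$-adic unit to nonvanishing rational value is the trivial converse recorded in the lemma, while the reverse implication from nonvanishing rational value to $p$-adic unit for $p > \max(d,\MaxPrime(h_{r,n,k}))$ is its effective content. For Part~(3) I plan to translate between two factored forms of $\tilde{h}_{r,n,k}(\pfloor{p/d})$: Lemma~\ref{L:kappa}(2) supplies the factorization $\tilde{h}_{r,n,k}(\pfloor{p/d}) = \kappa_{r,n,k}\,\alpha_{r,n,k}$, and Lemma~\ref{L:kappa}(1) confirms $\kappa_{r,n,k} \in \Z_p^*$ once $p \geq d+b$ with $b := k - k_{r,n}^o \leq k$, so $p > d+k$ suffices for the equivalence $\tilde{h}_{r,n,k}(\pfloor{p/d}) \in \Z_p^*$ iff $\alpha_{r,n,k} \in \Z_p^*$. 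The passage from $\tilde{h}_{r,n,k}$ to its primitive part $\tilde{h}_{r,n,k}^o$ amounts to dividing by a fixed integer content depending only on $d,s,r,b$; for $p$ coprime to that content the unit property is preserved, and this excludes only finitely many primes.

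Part~(4) is then the chain of equivalences produced by Parts~(1)--(3): the least $k$ with $h_{r,n,k} \neq 0$ coincides with the least $k$ at which $\tilde{h}_{r,n,k}^o(-r/d) \neq 0$, equivalently the least $k$ at which $\tilde{h}_{r,n,k}^o(\pfloor{p/d}) \in \Z_p^*$ for all sufficiently large $p \equiv r \bmod d$, equivalently the least $k$ at which $\alpha_{r,n,k} \in \Z_p^*$ under the same condition. The main technical point I expect to attend to is the bookkeeping of the precise ranges of $p$ at each implication, particularly ensuring that the content-normalization step in Part~(3) is compatible with the effective bound supplied by Lemma~\ref{L:basic}(2). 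Since the content of $\tilde{h}_{r,n,k}$ is a fixed positive integer independent of $p$, only finitely many primes are potentially excluded, and these are cleanly absorbed into the ``$p$ large enough'' and $\MaxPrime(h_{r,n,k})$ hypotheses already present in the statement.
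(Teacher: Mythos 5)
Your proposal follows the paper's proof essentially verbatim: Part (1) from the definition of $H_{r,n}$ in (\ref{E:H_n}), Part (2) from Lemma \ref{L:basic}, Part (3) from the factorization $\tilde{h}_{r,n,k}(\pfloor{p/d}) = \kappa_{r,n,k}\,\alpha_{r,n,k}$ of Lemma \ref{L:kappa}, and Part (4) by chaining (1)--(3). You are if anything slightly more careful than the paper in Part (3), where you explicitly flag the passage from $\tilde{h}_{r,n,k}$ to its primitive part $\tilde{h}_{r,n,k}^o$ and the need for $p$ to be coprime to the content---a subtlety the paper's one-line argument glosses over by working only with $\tilde{h}_{r,n,k}$ even though the statement is phrased in terms of $\tilde{h}_{r,n,k}^o$.
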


\begin{proof} %%% proof
Part (1) follows from the definition of $H_{r,n}$ in (\ref{E:H_n}).
Part (2) follows from Lemma \ref{L:basic}.
Part (3) follows from Lemma \ref{L:kappa}:
since for $p>d+k$ we have 
$\kappa_{r,n,k}\in\Z_p^*$ and 
by Lemma \ref{L:kappa}
$\alpha_{r,n,k}=\tilde{h}_{r,n,k}(\pfloor{\frac{p}{d}})/\kappa_{r,n,k}$, we have
that 
$\tilde{h}_{r,n,k}(\pfloor{\frac{p}{d}}) \in\Z_p^*$
if and only if $\alpha_{r,n,k}\in\Z_p^*$.
It is clear that Part (4) follows from (1)--(3).
\end{proof}

\section{Tame $A$-determinant}

This section completely determines 
the $p$-adic order of certain  finite {\em tame} $A$-determinant.
These tame $A$-determinants will be used to approximate 
our Fredholm $A$-determinant in Section 4.
They are the bridge connecting the generating polynomials 
to the actually $p$-adic Fredholm determinant 
in Dwork theory.

Let $E_p(-)$ be the $p$-adic Artin-Hasse exponential function (see \cite{Kob84}).
We pick a root $\gamma$ of $\sum_{i=1}^{\infty}\frac{x^{p^i}}{p^i}$ in $\bar{\Q}$ of 
$\ord_p \gamma = 1/(p-1)$ such that 
$\zeta_p=E_p(\gamma)$ is the same primitive $p$-th root of unity as
in the beginning and throughout of this paper.
For any integer $pi-j$ with $1\leq i,j\leq d-1$ 
we define and polynomial in $\Q[\gamma][A]$ for every $\ell^o\in\Z_{\geq 0}$
\begin{eqnarray}\label{E:F}
F_{pi-j,\ell^o}(A) &:=&  \sum_{\ell=0}^{\ell^o}\frac{A^{m_{ij}^\ell}\gamma^{m_{ij}^\ell+n_{ij}^{\ell}}}{m_{ij}^\ell! n_{ij}^\ell!}
\end{eqnarray}
%Let  
%$M_N^{[n]}:=(F_{pi-j,N}(A))_{1\leq i,j\leq n}$ be the $n\times n$ matrix.
Define the $n$-th tame $A$-determinant  
\begin{eqnarray}\label{E:P_n}
P_{n,\ell^o}(A)&:=&\det((F_{pi-j,\ell^o})_{1\leq i,j\leq n}).
\end{eqnarray}
It lies in $\Q[\gamma][A]$ and its key property is provided below in 
the lemma.
%Let 
%\begin{eqnarray}\label{E:N}
%N_r &:=& 
%\max(d+k,\MaxPrime_n(h_{r,n,k})).
%\end{eqnarray}
Notice that $\Z_p[\gamma]=\Z_p[\zeta_p]$ is the ring of integers
in $\Q_p(\gamma)=\Q_p(\zeta_p)$.

\begin{lemma}\label{L:monomial} %%%L:monomial
Let $1\leq n\leq d-1$, let $\ell_b^o=\pfloor{b/d}$
where $b\in\Z_{\geq 0}$. 

\begin{enumerate}
\item % 1
Then $P_{n,\ell_b^o}(A)$ 
can be written as a polynomial in 
$\Q[A\gamma^{1-\frac{s}{d}}]$
whose coefficients are monomials in $A\gamma^{1-\frac{s}{d}}$. 
Furthermore, we have
\begin{eqnarray*}
P_{n,\ell_b^o}(A) &=& 
\gamma^{\frac{(p-1)n(n+1)}{2d}}
\sum_{k_{r,n}^o\le k< k_{r,n}^o+b}
\alpha_{r,n,k}(A\gamma^{1-\frac{s}{d}})^k\\
&&+\gamma^{\geq\frac{(p-1)n(n+1)}{2d}+(1-\frac{s}{d})(k_{r,n}^o+b)}R
%&=&\gamma^{\frac{(p-1)n(n+1)}{2d}}\sum_{0\leq k- k_{r,n}^o<b}
%h_n^{k,r}(\pfloor{\frac{p}{d}}) 
%\cdot\frac{1}{\kappa_{r,n,k}}\cdot A^k\gamma^{(1-\frac{s}{d})k} + \sum_{k\geq %k_{r,n}^o+b} .
\end{eqnarray*}
for some $R\in\Z_p[\gamma][A]$.

\item %2 
If $H_{r,n}=h_{r,n,k_{r,n}}X^{k_{r,n}}+(\mbox{higher terms})$
then
for all $p\equiv r\bmod d$ and $p>\max(d+k_{r,n},\MaxPrime(h_{r,n,k_{r,n}}))$ 
\begin{eqnarray*}
\ord_p (P_{n,\ell_b^o}(A)) &=& \frac{n(n+1)}{2d}+\frac{(1-\frac{s}{d})k_{r,n}}{p-1}.
\end{eqnarray*}
Conversely, if
$\ord_p (P_{n,\ell_b^o}(A))= \frac{n(n+1)}{2d}+\frac{(1-\frac{s}{d})k_{r,n}}{p-1}$
 for $p>d+k_{r,n}$ then 
$H_{r,n}=h_{r,n,k_{r,n}}X^{k_{r,n}}+(\mbox{higher terms})$.

\item %3
Let $a\in\bar\Q^*$ and let $\bar{a}$ be its residue reduction over $p$.
Let $\hat{a}$ be the Teichm\"uller lifting of $\bar{a}$.
If $H_r=\sum_{n=1}^{d-1}h_{r,n,k}X_{r,n}^{k_{r,n}}+(\mbox{higher terms})$ 
then  for all prime $p\equiv r\bmod d$ and $p> \max(d+\max_n(k_{r,n}),\MaxPrime_n(h_{r,n,k_{r,n}}), \MaxPrime(a))$, we have  for all $n$
\begin{eqnarray*}
\ord_p (P_{n,\ell_b^o}(\hat{a})) &=& \frac{n(n+1)}{2d}+\frac{(1-\frac{s}{d})k_{r,n}}{p-1}.
\end{eqnarray*}
Conversely, if 
$\ord_p (P_{n,\ell_b^o}(\hat{a}))= \frac{n(n+1)}{2d}+\frac{(1-\frac{s}{d})k_{r,n}}{p-1}$
for all $n$ and all prime $p> \max(d+\max_n(k_{r,n}),\MaxPrime(a))$
then 
$H_r=\sum_{n=1}^{d-1}h_{r,n,k}X_{r,n}^{k_{r,n}}+(\mbox{higher terms})$.
\end{enumerate}
\end{lemma}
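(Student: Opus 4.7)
The plan is to handle the three parts separately: Part (1) by explicit determinant expansion, Part (2) by a $p$-adic valuation comparison based on Part (1), and Part (3) as the specialization $A = \hat{a}$ of Part (2).

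For Part (1), I would expand $P_{n,\ell_b^o}(A)$ via the Leibniz formula into a sum over $\sigma \in S_n$ and $(\ell_1, \ldots, \ell_n) \in \{0,\ldots,\ell_b^o\}^n$. Setting $k = \sum_i m_{i,\sigma(i)}^{\ell_i}$ and combining the identity $m_{ij}^{\ell} + n_{ij}^{\ell} = \beta_{pi-j}(s,d) + (d-s)\ell$ from Lemma \ref{L:Frobenius}(4) with $\beta_{pi-j}(s,d) = (pi-j)/d + (1-s/d) m_{ij}$ from Lemma \ref{L:Frobenius}(2), the total $\gamma$-exponent collapses to $(p-1)n(n+1)/(2d) + (1-s/d)k$, a quantity depending only on $k$. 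Factoring out $\gamma^{(p-1)n(n+1)/(2d)}$ then rewrites $P_{n,\ell_b^o}(A)$ as a polynomial in $A\gamma^{1-s/d}$. By Lemma \ref{L:bound}(1), every $(\sigma,\ell_i) \in \cS(k)$ automatically satisfies $\ell_i \le \pfloor{(k-k_{r,n}^o)/d}$, and this bound is at most $\ell_b^o$ whenever $k < k_{r,n}^o + b$, so the truncation is harmless in that range and the coefficient of $(A\gamma^{1-s/d})^k$ is exactly $\alpha_{r,n,k}$. The remaining contributions with $k \ge k_{r,n}^o + b$ form the tail $R$ with $\gamma$-order at least $(p-1)n(n+1)/(2d)+(1-s/d)(k_{r,n}^o+b)$.

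For Part (2), I would first take $b$ large enough that $k_{r,n}^o + b > k_{r,n}$, placing the $k_{r,n}$ term inside the expansion of Part (1). Since $\ord_p \gamma = 1/(p-1)$, the $k$-th summand contributes $p$-adic order $n(n+1)/(2d) + (1-s/d)k/(p-1) + \ord_p \alpha_{r,n,k}$. The hypothesis on $H_{r,n}$ together with Proposition \ref{P:A^k}(4) yields $\alpha_{r,n,k_{r,n}} \in \Z_p^*$. For $k < k_{r,n}$ the vanishing $h_{r,n,k} = 0$ means $\tilde h_{r,n,k}^o(-r/d) = 0$, so the converse direction of Lemma \ref{L:basic}(2) forces $p \mid \tilde h_{r,n,k}^o(\pfloor{p/d})$, whence by Lemma \ref{L:kappa} (noting $\kappa_{r,n,k}$ is a $p$-adic unit since $p>d+k_{r,n}$) we obtain $\ord_p \alpha_{r,n,k} \ge 1$, using that $\alpha_{r,n,k} \in \Q \cap \Z_p$ has integral $p$-adic order. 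A direct valuation comparison then shows the $k_{r,n}$ term strictly dominates: against lower-$k$ terms because $(1-s/d)k_{r,n}/(p-1) < 1$ when $p > d+k_{r,n}$; against higher-$k$ terms in the sum because their $\gamma$-order is strictly larger; and against the tail $R$ by the Part (1) bound combined with $b > k_{r,n} - k_{r,n}^o$. The converse direction runs each implication backwards, using the other halves of Lemmas \ref{L:kappa} and \ref{L:basic} to translate the prescribed $\ord_p$ into $h_{r,n,k_{r,n}} \ne 0$ and $h_{r,n,k} = 0$ for $k < k_{r,n}$.

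Part (3) is the specialization $A = \hat a$. Since $p > \MaxPrime(a)$ forces $\bar a \ne 0$, the Teichmüller lift $\hat a \in \Z_p[\zeta_p]^\ast$, hence $\ord_p(\hat a^k) = 0$ and the bookkeeping of Part (2) carries over verbatim; applying it for each $n = 1, \ldots, d-1$ and taking the maximum of the individual thresholds over $n$ produces the stated bound on $p$. The main obstacle will be the careful bookkeeping in Part (1), namely the precise use of Lemma \ref{L:bound}(1) to confirm that truncation at $\ell_b^o$ preserves the full coefficient $\alpha_{r,n,k}$ exactly in the range $k < k_{r,n}^o + b$ while cleanly pushing the rest into a $\gamma$-order tail with the stated lower bound; a secondary subtlety in Part (2) is the rational-integrality step that upgrades $\ord_p \alpha_{r,n,k} > 0$ to $\ord_p \alpha_{r,n,k} \ge 1$, without which the comparison with the leading term would not be uniform in $p$.
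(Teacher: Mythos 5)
Your proposal matches the paper's own argument in structure and detail: Part (1) via Leibniz expansion, collapsing the total $\gamma$-exponent to $\frac{(p-1)n(n+1)}{2d}+(1-\frac{s}{d})k$ using Lemma~\ref{L:Frobenius}, and using Lemma~\ref{L:bound}(1) to justify that the truncation at $\ell_b^o$ preserves the exact coefficient $\alpha_{r,n,k}$ for $k < k_{r,n}^o+b$; Part (2) via Proposition~\ref{P:A^k} together with the fact that a non-unit in $\Q\cap\Z_p$ has $\ord_p\geq 1$; Part (3) as the unit specialization $A=\hat a$. You are in fact somewhat more explicit than the paper about the valuation comparisons needed to conclude that the $k_{r,n}$ term strictly dominates (against smaller $k$ via $(1-\frac{s}{d})(k_{r,n}-k)<p-1$, against larger $k$ via the strict $\gamma$-exponent increase, and against the tail $R$ via $b>k_{r,n}-k_{r,n}^o$), and about the need to take $b$ large enough that the $k_{r,n}$ term actually lies inside the truncated sum — both points the paper leaves implicit in "as displayed by our part (1)." Your treatment of the converse direction is a bit terse ("run each implication backwards"); the paper is equally terse, but note that to conclude $h_{r,n,k}=0$ for $k<k_{r,n}$ via Lemma~\ref{L:basic}(2) one in principle needs the $\ord_p$ equality for all sufficiently large $p\equiv r\bmod d$ (not merely one), so that $p$ eventually exceeds the implicit $\MaxPrime$ bounds of each $\tilde h^o_{r,n,k}$. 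This is a point worth surfacing, but it does not constitute a gap relative to the paper's own level of detail.
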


\begin{proof}
(1)
By the formal expansion of determinant and the above identity, we have
\begin{eqnarray*}
P_{n,\ell_b^o} (A)&=&
\sum_{\sigma\in S_n} \sgn(\sigma)\prod_{i=1}^{n} F_{pi-\sigma(i),\ell_b^o}\\
&=& 
\sum_{\sigma\in S_n} \sgn(\sigma)\prod_{i=1}^{n} 
\sum_{\ell=0}^{\ell_b^o}\frac{A^{m_{i,\sigma(i)}^\ell}\gamma^{m_{i,\sigma(i)}^\ell
+ n_{i,\sigma(i)}^\ell}} 
{m_{i,\sigma(i)}^\ell! n_{i,\sigma(i)}^\ell!}\\
&=&
\sum_{\sigma\in S_n}\sgn(\sigma) 
\sum_{0\leq \ell_{ij}\leq \ell_b^o}\frac{A^{\sum_{i=1}^{n} m_{i,\sigma(i)}^{\ell_{i,\sigma(i)}}}}{
\prod_{i=1}^{n} m_{i,\sigma(i)}^{\ell_{i,\sigma(i)}}! n_{i,\sigma(i)}^{\ell_{i,\sigma(i)}}!} 
\gamma^{\sum_{i=1}^{n}m_{i,\sigma(i)}^{\ell_{i,\sigma(i)}}+n_{i,\sigma(i)}^{\ell_{i,\sigma(i)}} }
\end{eqnarray*}
Notice that by Lemma \ref{L:Frobenius} for any $\ell_{i,\sigma(i)}$
$$
d\sum_{i=1}^{n} n_{i,\sigma(i)}^{\ell_{i,\sigma(i)}} + 
s \sum_{i=1}^{n} m_{i,\sigma(i)}^{\ell_{i,\sigma(i)}}= (p-1)n(n+1)/2.
$$
Write $k=\sum_{i=1}^{n}m_{i,\sigma(i)}^{\ell_{i,\sigma(i)}}$. 
Then
$$
\sum_{i=1}^{n}m_{i,\sigma(i)}^{\ell_{i,\sigma(i)}}+n_{i,\sigma(i)}^{\ell_{i,\sigma(i)}}
=\frac{(p-1)n(n+1)}{2d} + (1-\frac{s}{d})k.
$$
Then there are $w_k\in\Z_p^*$ such that
\begin{eqnarray*}
P_{n,\ell_b^o}(A)&=& 
\gamma^{\frac{(p-1)n(n+1)}{2d}} \sum_{k_{r,n}^o\leq k\leq k_{r,n}^o+b} 
\alpha_{r,n,k} A^k\gamma^{(1-\frac{s}{d})k}\\
&&+\gamma^{\frac{(p-1)n(n+1)}{2d}}\sum_{k\geq k_{r,n}^o+b}
\sum_{\sigma\in S_n^k}
\sgn(\sigma) w_kA^k\gamma^{(1-\frac{s}{d})k} 
\\ 
&=&\gamma^{\frac{(p-1)n(n+1)}{2d}} 
     \sum_{k_{r,n}^o\leq k\leq k_{r,n}^o+b} 
     \alpha_{r,n,k} (A\gamma^{(1-\frac{s}{d})})^k\\
&&+\gamma^{\geq \frac{(p-1)n(n+1)}{2d}+(1-\frac{s}{d})k_{r,n}} R
\end{eqnarray*}
for some $R\in\Z_p[\gamma][A]$. 

(2)
Fix $n$.
By Proposition  \ref{P:A^k}
our hypothesis implies $k_{r,n}$ is the least $k$ such that
$\alpha_{r,n,k_{r,n}}\in \Q\cap\Z_p^*$ 
for $p\equiv r\bmod d$ and $p>N_r$.
For all $k_{r,n}^o\leq k<k_{r,n}$ 
we have $\alpha_{r,n,k_{r,n}}\in \Q\cap p\Z_p$.
Hence $\ord_p \alpha_{r,n,k_{r,n}}\geq 1$.
Thus 
the $p$-adic valuations are precisely as displayed 
by our part (1).

(3) 
Fix $n$.  
Since $a\neq 0$ we have for 
$p>\MaxPrime(a)$ then  $\hat{a}\in\bar\Z^*$. 
Consider the formula in Part (1) 
\begin{eqnarray*}
P_{n,\ell_b^o}(\hat{a}) &=& 
\gamma^{\frac{(p-1)n(n+1)}{2d}}
\sum_{k_{r,n}^o\le k< k_{r,n}^o+b}
\alpha_{r,n,k}\hat{a}^k \gamma^{(1-\frac{s}{d})k}\\
&&+\mbox{(higher $\gamma$-terms)}
\end{eqnarray*}
Then applying an analogous argument of Part (2) 
we conclude that
$H_{r,n}$ has lowest degree term $h_{r,n,k_{r,n}}X^{k_{r,n}}$
if and only if $k_{r,n}$ is the least $k$ such that 
$\alpha_{r,n,k}\hat{a}^k\in\bar\Z_p^*$; and
hence it is equivalent to 
$$\ord_p P_{n,\ell_b^o}(\hat{a}) 
=\frac{n(n+1)}{2d}+\frac{(1-\frac{s}{d})k_{r,n}}{p-1}. 
$$
This proves our statements.
\end{proof}

\section{Asymptotic Dwork theory for $A$-families}

In this section we approximate Fredholm $A$-determinant 
by those tame determinants defined in Section 3.
To keep the paper short 
we refer the reader to
\cite{AS89}, \cite{Wan93} and \cite{Wan04}
for more thorough treatment of 
classical Dwork theorem.
Let $f(x)=x^d+ax^s$ be a polynomial with $a\in\bar\Q$ and $d>s\geq 1$ are coprime 
integers. Namely, $f(x)\in \A(s,d)(\bar\Q)$.
Let $\bar{a}$ be the reduction mod $\wp$ of $a$ for a prime ideal $\wp$ 
in the number field $\Q(a)$. Let $\hat{a}$ be the $p$-adic
Teichm\"uller lifting of $\bar{a}$ in $\bar\Z_p$.
We recall the Dwork trace formula for the $L$ function of exponential sum of 
$\bar{f}=f(x)\bmod \wp$, assuming $\wp$ has residue field $\F_q$ 
for some $p$-power $q$. 
Let $\zeta_p$ be the primitive $p$-th root of unity fixed since the first section of this paper.
Let $\gamma \in\bar\Q_p$ be the root of 
$\log_p E_p(x)=\sum_{i=0}^{\infty}\frac{x^{p^i}}{p^i}$
with $\ord_p(\gamma)=1/(p-1)$ such that 
$E_p(\gamma)=\zeta_p$.
Write $E_p(\gamma X) =  \sum_{t=0}^{\infty}\lambda_tX^t$
for some $\lambda_t\in (\Q\cap\Z_p)[\gamma]$. 
Then we have
$\lambda_t = \gamma^t/t!$ for all $0\leq t\leq p-1$, and  
$\ord_p \lambda_t \geq t/(p-1)$ for all $t\geq 0$.
For any integer $v\geq 0$ let  
\begin{eqnarray}
F'_v(A) &:=&\sum_{n_v,m_v}  \lambda_{n_v} \lambda_{m_v}A^{m_v}
\end{eqnarray}
where the sum ranges over $m_v,n_v\in\Z_{\geq 0}$ such that $n_vd+m_vs=v$.
For the only situation we are studying in this paper 
$v=pi-j$ with $1\leq i,j\leq d-1$ 
we use the notation from Lemma \ref{L:Frobenius}
that is,
$m_{pi-j}=m_{ij}^\ell$
and 
$n_{pi-j}=n_{ij}^\ell$.

From now on we assume $p>s(d-1)$ (so as to apply Lemma \ref{L:Frobenius}).
Recall from (\ref{E:F}) and for $\ell_b^o=\pfloor{b/d}$ with $b\in\Z_{\geq 0}$
\begin{eqnarray*}
F_{pi-j,\ell_b^o}(A) &=& 
\sum_{\ell=0}^{\ell_b^o}\frac{A^{m_{ij}^\ell}\gamma^{m_{ij}^\ell+n_{ij}^\ell}}{m_{ij}^\ell! n_{ij}^\ell!}.
\end{eqnarray*}
Then we have 
\begin{eqnarray*}
F'_{pi-j}(A) 
&=& 
\sum_{\ell\geq 0}
u_{i,j,\ell}A^{m_{ij}^\ell}\gamma^{m_{ij}^\ell+n_{ij}^\ell}\\
&=&
F_{pi-j,\ell_b^o}(A)+ \sum_{\ell>\ell_b^o}
u_{i,j,\ell}A^{m_{ij}^\ell}\gamma^{m_{ij}^\ell+n_{ij}^\ell}
\end{eqnarray*} 
for some $u_{i,j,\ell}\in\Z_p[\gamma]$ which is equal to 
$\frac{1}{m_{ij}^\ell!n_{ij}^\ell!}$ when $\ell\leq \ell_b^o$.
%Let $M_N(\hat{a}):=(F_{pi-j,N}(\hat{a}))_{i,j\geq 1}$ be the infinite matrix,
%and let $M_N^{[n]}$ denote the truncated $n\times n$ submatrix of $M_N$.
Let $P_{n,\ell_b^o}(A)=\det(F_{pi-j,\ell_b^o}(A))_{1\leq i,j\leq  n}$ for all $1\leq n\leq d-1$. We show below that $P_{n,\ell_b^o}(A)$ approximates 
$P'_n(A):=\det(F'_{pi-j})_{1\leq i,j\leq n}$ up to $b$ terms $p$-adically.

\begin{lemma}\label{L:approx}
Write the generating function 
$H_r=\sum_{k\geq k_{r,n}^o}\sum_{n=1}^{d-1}h_{r,n,k}X_{r,n}^k$.

(1) 
If $H_r=\sum_{n=1}^{d-1}h_{r,n,k_{r,n}}X_{r,n}^{k_{r,n}}+(\mbox{higher terms})$,
we write $N_r:=
\max(s(d-1), d+\max_n(k_{r,n}),\MaxPrime_n(h_{r,n,k_{r,n}}))$,
then for all $n$ for all prime $p\equiv r\bmod d$ with 
$p>N_r$ we have
$$
\ord_p(P'_n(A)) =\ord_p(P_{n,\ell_b^o}(A)) =
\frac{n(n+1)}{2d}+\frac{(1-\frac{s}{d})k_{r,n}}{p-1}.
$$
Conversely, if 
$
\ord_p(P'_n(A)) =\ord_p(P_{n,\ell_b^o}(A)) =
\frac{n(n+1)}{2d}+\frac{(1-\frac{s}{d})k_{r,n}}{p-1}
$
for all $n$ and all prime $p>\max(s(d-1),d+\max_n(k_{r,n})))$
then we have
$H_r=\sum_{n=1}^{d-1}h_{r,n,k_{r,n}}X_{r,n}^{k_{r,n}}+(\mbox{higher terms})$.

(2)
Let $a\in\bar\Q^*$ and let $\bar{a}$ be its residue reduction over $p$.
Let $\hat{a}$ be the Teichm\"uller lifting of $\bar{a}$.
If $H_r=\sum_{n=1}^{d-1} h_{r,n,k_{r,n}}X_{r,n}^{k_{r,n}}+(\mbox{higher terms})$
then for all prime $p\equiv r\bmod d$ and 
$p>\max(N_r,\MaxPrime(a))$
for all $1\leq n\leq d-1$ we have 
\begin{eqnarray*}
\ord_p (P'_n(\hat{a})) &=& \frac{n(n+1)}{2d}+\frac{(1-\frac{s}{d})k_{r,n}}{p-1}.
\end{eqnarray*}
Conversely, if
for all $n$ and all prime $p\equiv r\bmod d$ and 
$p>\max(s(d-1), d+\max_n(k_{r,n}),\MaxPrime(a))$ 
we have 
$
\ord_p (P'_n(\hat{a})) = \frac{n(n+1)}{2d}+\frac{(1-\frac{s}{d})k_{r,n}}{p-1}
$
then the generating function is of the form
$H_r=\sum_{n=1}^{d-1} h_{r,n,k_{r,n}}X_{r,n}^{k_{r,n}}+(\mbox{higher terms})$.
\end{lemma}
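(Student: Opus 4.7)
The plan is to show that for a suitably chosen $b$ (depending on $k_{r,n}$), the tame determinant $P_{n,\ell_b^o}(A)$ approximates the full Fredholm-type determinant $P'_n(A)$ closely enough in $p$-adic valuation that the two orders coincide, then invoke Lemma~\ref{L:monomial} to read off the exact value. Concretely, I would take $b=k_{r,n}-k_{r,n}^o$ (or any larger integer) and write, for each pair $(i,j)$,
\begin{equation*}
F'_{pi-j}(A) \;=\; F_{pi-j,\ell_b^o}(A) \;+\; \sum_{\ell>\ell_b^o} u_{i,j,\ell}\, A^{m_{ij}^\ell}\gamma^{m_{ij}^\ell+n_{ij}^\ell},
\end{equation*}
where the $u_{i,j,\ell}\in\Z_p[\gamma]$ (by the hypothesis $p>s(d-1)$ so Lemma~\ref{L:Frobenius} applies and $\lambda_t=\gamma^t/t!$ for $t\le p-1$). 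Expanding $P'_n(A)=\det(F'_{pi-j})$ by the Leibniz formula and distributing the two pieces yields $P'_n(A)=P_{n,\ell_b^o}(A)+R_n$, where $R_n$ collects all mixed terms that involve at least one ``tail'' factor with $\ell>\ell_b^o$.

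The crux is bounding $\ord_p(R_n)$ from below. By Lemma~\ref{L:Frobenius}(4), each factor from index $(i,j)$ with exponent $\ell$ contributes $\gamma^{m_{ij}^\ell+n_{ij}^\ell}=\gamma^{\beta_{pi-j}(s,d)+(d-s)\ell}$; summing over $i=1,\dots,n$ in a Leibniz term gives total $\gamma$-exponent $\tfrac{(p-1)n(n+1)}{2d}+(1-\tfrac{s}{d})\sum m_{i,\sigma(i)}^{\ell_{i,\sigma(i)}}$. As soon as one $\ell_{i,\sigma(i)}$ exceeds $\ell_b^o=\pfloor{b/d}$, the total mass $\sum m_{i,\sigma(i)}^{\ell_{i,\sigma(i)}}$ is strictly greater than $k_{r,n}^o+b\ge k_{r,n}$, so
\begin{equation*}
\ord_p(R_n)\;>\;\frac{n(n+1)}{2d}+\frac{(1-\tfrac{s}{d})k_{r,n}}{p-1}.
\end{equation*}
By Lemma~\ref{L:monomial}(2), the tame piece $P_{n,\ell_b^o}(A)$ has order exactly equal to the displayed right-hand side, provided $p\equiv r\bmod d$ and $p>N_r$. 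Therefore the strict inequality forces $\ord_p(P'_n(A))=\ord_p(P_{n,\ell_b^o}(A))$, which is the forward direction of Part~(1).

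For Part~(2), I would repeat the same decomposition with $A$ replaced by the Teichmüller lift $\hat a$. Since $a\ne 0$ and $p>\MaxPrime(a)$ we have $\hat a\in\bar\Z_p^*$, so substituting $\hat a$ preserves all the $p$-adic valuations computed above; the same tail-versus-leading comparison then transfers verbatim, and Lemma~\ref{L:monomial}(3) supplies the exact order of $P_{n,\ell_b^o}(\hat a)$.

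The two converses are symmetric: if $\ord_p(P'_n(A))=\frac{n(n+1)}{2d}+(1-\tfrac{s}{d})k_{r,n}/(p-1)$ for all primes in the stated range, then the tail argument (which holds unconditionally for $p>s(d-1)$) forces the tame determinant to have the same order, and Lemma~\ref{L:monomial}(2),(3) converts this into the statement about $H_r$. The main technical obstacle, and the reason I need the hypothesis $p>N_r$ rather than merely $p>s(d-1)$, is ensuring simultaneously that the factorials $m_{ij}^\ell!$ remain units for every $\ell\le\ell_b^o$ (handled by $p\ge d+\max_n k_{r,n}$) and that the coefficient $\alpha_{r,n,k_{r,n}}$ itself remains a unit — this is what forces the bound $p>\MaxPrime_n(h_{r,n,k_{r,n}})$ through Proposition~\ref{P:A^k}(2)–(3). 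Once these denominators and congruences are controlled, everything reduces to the clean $\gamma$-order comparison sketched above.
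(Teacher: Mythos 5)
Your proposal is correct and takes essentially the same route as the paper: expand the full Fredholm block $P'_n$ by Leibniz, separate the tame truncation from the high-$\ell$ tail, use Lemma~\ref{L:Frobenius}(4) to compute the $\gamma$-exponent $\frac{(p-1)n(n+1)}{2d}+(1-\frac{s}{d})\sum m_{i,\sigma(i)}^{\ell_{i,\sigma(i)}}$ of each term, and then invoke Lemma~\ref{L:monomial} together with Proposition~\ref{P:A^k} to pin the leading coefficient $\alpha_{r,n,k_{r,n}}$ as a $p$-adic unit. The paper writes this as a direct $\gamma$-series expansion of $P'_n(A)$ with a high-order remainder $R$ rather than as the explicit decomposition $P'_n=P_{n,\ell_b^o}+R_n$, but the two are the same rearrangement of identical terms and the valuation comparison is the same.
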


\begin{proof}
(1) 
Let $1\leq i,j\leq d-1$. Let $p>s(d-1)$.
Then we have for some $u_{i,\sigma(i),\ell}\in\Z_p$ that 
\begin{eqnarray*}
P'_n(A)
&=& 
\sum_{\sigma\in S_n} \sgn(\sigma) \prod_{i=1}^{n} \sum_{\ell=0}^{\infty}
u_{i,\sigma(i),\ell} A^{m_{i,\sigma(i)}^\ell}
\gamma^{m_{i,\sigma(i)}^\ell + n_{i,\sigma(i)}^\ell}.
\end{eqnarray*}
Using the same computational argument as that of Lemma \ref{L:monomial}
we get 
\begin{eqnarray*}
&=& 
\gamma^{\frac{(p-1)n(n+1)}{2d}}
\sum_{k_{r,n}^o\leq k< k_{r,n}^o+b}
\left(
\sum_{(\sigma,\ell_{i,\sigma(i)})\in\cS(k)}
\frac{\sgn(\sigma)A^k}{\prod_{i=1}^{n}m_{i,\sigma(i)}^{\ell_{i,\sigma(i)}} ! 
n_{i,\sigma(i)}^{\ell_{i,\sigma(i)}}!}\right) 
\gamma^{(1-\frac{s}{d})k}\\
&&+
\gamma^{\frac{(p-1)n(n+1)}{2d}}
\sum_{k\geq k_{r,n}^o+b}
\sum_{(\sigma,\ell_{i,\sigma(i)})\in\cS(k)}\sgn(\sigma)w_k A^k
\gamma^{(1-\frac{s}{d})k}
\end{eqnarray*}
for some $w_k\in\Z_p[\gamma]$.
By Lemma \ref{L:kappa} we can write 
\begin{eqnarray*}
P'_n(A) &=&
\gamma^{\frac{(p-1)n(n+1)}{2d}}\sum_{0\leq k-k_{r,n}^o<b}
\alpha_{r,n,k} A^k \gamma^{(1-\frac{s}{d})k}\\
&&+\gamma^{\geq\frac{(p-1)n(n+1)}{2d}+(1-\frac{s}{d})(k_{r,n}^o+b)}R
\end{eqnarray*}
for some $R\in\Z_p[\gamma][A]$.
Since $H_r\neq 0$ we have a minimal such $k$, 
denoted by $k_{r,n}$, such that 
$\tilde{h}_{r,n,k}(-\frac{r}{d})\neq 0$ and 
$0\leq k-k_{r,n}^o<b$. By Proposition \ref{P:A^k} for $p\equiv r\bmod d$ 
with $p>\max(d+\max_n(k_{r,n}),\MaxPrime_n(h_{r,n,k_{r,n}}))$ 
we have
$\alpha_{r,n,k}\in\Z_p^*$ and hence
$$
\ord_p(P'_n(A))=
\frac{n(n+1)}{2d}+\frac{(1-\frac{s}{d})k_{r,n}}{p-1}.
$$
Comparing with Lemma \ref{L:monomial}
$$
\ord_p (P_{n,\ell_b^o}(A)) = \frac{n(n+1)}{2d}+\frac{(1-\frac{s}{d})k_{r,n}}{p-1}
$$

The converse direction follows by applying Proposition \ref{P:A^k} 
again with analogous argument as that of Lemma \ref{L:monomial}.

(2) 
The proof here is analog to that of Lemma \ref{L:monomial}
by applying Proposition \ref{P:A^k} by applying the extra condition
that $p>s(d-1)$ on top of both direction.
\end{proof}

Then we prove Theorem \ref{T:main} below by applying 
the $p$-adic Dwork theory and transformation theorem we 
developed in \cite{Zhu12}.

\begin{theorem}[Theorem \ref{T:main}]
\label{T:Dwork}
Suppose $H_r\in\Q[X_{r,1},\ldots,X_{r,d-1}]$ 
defined in (\ref{E:H}) is nonzero with lowest degree terms 
 $\sum_{n=1}^{d-1}h_{r,n,k_{r,n}}X_{r,n}^{k_{r,n}}$.
Let $N_{s,d,r}$ be defined as in Theorem \ref{T:main}.
Then for $p\equiv r\bmod d$ and $p>N_{s,d,r}$ we have
$\GNP(\A(s,d),\bar\F_p)$ with
breaking points after origin 
$$
(n,\frac{n(n+1)}{2d}+\frac{(1-\frac{s}{d})k_{r,n}}{p-1})
$$
for $n=1,\ldots,d-1$.

Conversely, suppose for all prime $p\equiv r\bmod d$ and 
$p>\max(s(d-1), d+\max_n(k_{r,n}),2(d-s)\max_n(k_{r,n}))$
we have  
$\GNP(\A(s,d),\bar\F_p)$ is of the above form (it necessarily breaks at each point), then 
$H_r=\sum_{n=1}^{d-1}h_{r,n,k_{r,n}}X_{r,n}^{k_{r,n}}+(\mbox{higher terms})$.

Given $f=x^d+ax^s\in \A(s,d)(\bar\Q)$ 
with $\bar{f}=x^d+\bar{a}x^s \in\A(s,d)(\F_q)$. 
If $a\in\bar\Q^*$, then for all prime $p\equiv r\bmod d$ 
and $p>\max(N_{s,d,r},\MaxPrime(a))$ we have
$$
\NP(\bar{f}) =\GNP(\A(s,d),\bar\F_p).
$$
Furthermore, we have
$$
\lim_{\stackrel{p\rightarrow\infty}{p\equiv r\bmod d}}
\NP(\bar{f}) =\HP(\A(s,d)).
$$
\end{theorem}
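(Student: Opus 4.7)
The plan is to combine the $p$-adic Dwork trace formula with the asymptotic analysis developed in Section~3 and Lemma \ref{L:approx}. Specifically, for $\bar f = x^d + \bar a x^s$ over $\F_q$, Dwork's theory realizes $L(\bar f/\F_q;T)$ as (up to normalization) the characteristic polynomial of a composite Frobenius endomorphism $M'_c$ on a suitable Banach module. The universal version, with $\hat a$ replaced by a formal parameter $A$, gives an infinite ``$A$-matrix'' $M'(A)$ whose Fredholm determinant $\det(I - T M'(A))$ is a power series in $\Q_p(\gamma)\{A\}[[T]]$ whose specialization at the Teichm\"uller lift $\hat a$ recovers $L(\bar f/\F_q;T)$.

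First I would express the coefficient of $T^n$ in $\det(I - T M'(A))$ as the sum of all principal $n \times n$ minors of $M'(A)$, then identify the leading minor (in terms of $p$-adic order) as $P'_n(A) = \det(F'_{pi-j}(A))_{1 \le i,j \le n}$, the object analyzed in Lemma \ref{L:approx}. The non-principal minors carry strictly higher $p$-adic order because the Frobenius problem $dn_v + sm_v = v$ forces off-diagonal entries to have larger $\beta_v(s,d)$; under the hypothesis $p > s(d-1)$ this slack exceeds the correction $(1-s/d)k_{r,n}/(p-1)$ for $p$ large. Thus the Newton polygon of $\det(I - T M'(A))$ has its $n$-th breaking point at $\bigl(n,\, \tfrac{n(n+1)}{2d} + \tfrac{(1-s/d)k_{r,n}}{p-1}\bigr)$ by Lemma \ref{L:approx}(1), which by definition equals $\GNP(\A(s,d),\bar\F_p)$.

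Next, the Grothendieck--Katz specialization principle (see \cite{Katz,AS89}) forces the Newton polygon of $\det(I - T M'(\hat a))$ at any closed point $\bar a \in \bar\F_p$ to lie on or above $\GNP(\A(s,d),\bar\F_p)$, with equality on a dense open locus. For a particular $a \in \bar\Q^*$ with $p > \max(N_{s,d,r}, \MaxPrime(a))$, Lemma \ref{L:approx}(2) shows directly that $\ord_p P'_n(\hat a) = \tfrac{n(n+1)}{2d} + \tfrac{(1-s/d)k_{r,n}}{p-1}$, which matches the generic polygon and hence forces $\NP(\bar f) = \GNP(\A(s,d),\bar\F_p)$. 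The converse direction is handled by reversing this argument: if $\GNP$ has the displayed breaking points for all such $p$, then Lemma \ref{L:approx} forces $\alpha_{r,n,k_{r,n}} \in \Z_p^*$ for infinitely many $p$, which by Proposition \ref{P:A^k} and Lemma \ref{L:basic2} translates into $\tilde h^o_{r,n,k_{r,n}}(-r/d) \neq 0$, and the minimality of $k_{r,n}$ falls out of the same machinery applied to each $k < k_{r,n}$. The limit $\lim_{p \to \infty,\, p \equiv r} \NP(\bar f) = \HP(\A(s,d))$ is then immediate from $\tfrac{(1-s/d)k_{r,n}}{p-1} \to 0$ and the fact that the endpoints of all three polygons coincide.

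The main obstacle is the first step: rigorously truncating the Fredholm determinant of the infinite matrix $M'(A)$ to the $(d-1) \times (d-1)$ block indexed by $1 \le i,j \le d-1$ without affecting the Newton polygon, and showing that within this block the principal $n\times n$ minor $P'_n(A)$ dominates all competing minors. This requires a careful $p$-adic estimate exploiting Lemma \ref{L:Frobenius}(2)--(4) to control $\beta_v(s,d)$, together with the fact that every non-principal minor contributes terms of strictly larger $\gamma$-valuation; the condition $p > 2(d-s)\max_n(k_{r,n})$ built into $N_{s,d,r}$ is designed precisely to absorb the $(d-s)\ell$ overshoot appearing in Lemma \ref{L:Frobenius}(4). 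Once this approximation is in place, the remaining bookkeeping reduces to the identities already recorded in Lemmas \ref{L:monomial} and \ref{L:approx} together with Proposition \ref{P:A^k}.
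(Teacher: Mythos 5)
Your outline tracks the paper's proof closely, but it leaves the crucial truncation step as a declared obstacle rather than an established fact. The paper closes exactly this gap by appealing to the transform theorem of Section~5 of \cite{Zhu12}: after defining the twisted matrix $M''=(F'_{pi-j}\gamma^{j-i})$ and checking $\ord_p F''_{pi-j}\ge i/d$, it computes (via Lemma~\ref{L:approx}) that
\[
\ord_p P''_n(\hat a)=\ord_p P'_n(\hat a)=\frac{n(n+1)}{2d}+\frac{(1-\tfrac sd)k_{r,n}}{p-1},
\]
and then verifies the hypothesis the transform theorem requires, namely
\[
\frac{n(n+1)}{2d}\le \ord_p P''_n(\hat a)<\frac{n(n+1)+1}{2d}
\]
once $p>2(d-s)k_{r,n}+1$. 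That inequality is precisely the license to replace the Newton polygon of the infinite Fredholm determinant by $\NP_p\bigl(\sum_{n=0}^{d-1}P''_n(\hat a)T^n\bigr)$. You describe the needed estimate (``non-principal minors carry strictly higher $p$-adic order,'' ``the condition $p>2(d-s)\max_n k_{r,n}$ absorbs the $(d-s)\ell$ overshoot''), but you never prove it; in particular your heuristic that off-diagonal entries have larger $\beta_v(s,d)$ is not, by itself, a proof that every competing minor of the full infinite matrix is strictly dominated, nor that truncation to the $(d-1)\times(d-1)$ block is harmless. Without either reproving this or citing the transform lemma of \cite{Zhu12}, the reduction to the principal minors $P'_n$ is an assumption, not a conclusion, and the argument for both the GNP identification and the $\NP(\bar f)=\GNP$ equality is incomplete.

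The remaining pieces of your plan are sound and match the paper: using Lemma~\ref{L:approx} to compute $\ord_p P'_n(\hat a)$, invoking Grothendieck--Katz to compare $\NP(\bar f)$ with the generic polygon, running the implications of Proposition~\ref{P:A^k} and Lemma~\ref{L:basic2} backwards for the converse, and taking the $p\to\infty$ limit for the Hodge-polygon statement. So this is the same route, but with the load-bearing step flagged rather than filled.
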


\begin{proof}
Let $a\neq 0$.
We define a twisted matrix $M'':=(F''_{pi-j}):=(F'_{pi-j}\gamma^{j-i})$,
notice this is the matrix representing the Dwork operator 
with respect to a weighted monomial basis.
For $q=p^c$ for 
write 
$$(M''/\F_q)(A)
:= M''\cdot {M''}^{\tau^{-1}} \cdot {M''}^{\tau^{-2}}\cdots 
\cdot {M''}^{\tau^{-(c-1)}}
$$
where $\tau$ is the Frobenius map on $\Q_q(\zeta_p)$
that fixes $\Q_p(\zeta_p)$
that lifts the Frobenius map $x\mapsto x^p$ over its residue field
extension, and $\tau(A)=A^p$.
Then Dwork theory states that
\begin{eqnarray}
L(\bar{f}/\F_q,T) = \frac{\det(1-T(M''/\F_q)(\hat{a}))}{\det(1-qT(M''/\F_q)(\hat{a}))}
\end{eqnarray}
and it is of the form $1+C_1T+\cdots +C_{d-1}T^{d-1}$
in $\Z[\zeta_p][T]$.

Since 
$$
\ord_p F'_{pi-j}(\hat{a})\geq 
\frac{\pceiling{\frac{pi-j}{d}}}{p-1}\geq \frac{i}{d} + \frac{i-j}{d(p-1)},
$$
we have
$\ord_p (F''_{pi-j})\geq \frac{i}{d}$ for every $i,j\geq 1$.
Write $P''_n:=\det((M'')^{[n]})$. Obviously 
$P''_n(\hat{a}) =P'_n(\hat{a})$.
Apply Lemma \ref{L:approx}, 
we have that for $p\equiv r\bmod d$ and $p>\max(N_r,\MaxPrime(a))$
and for all $1\leq n\leq d-1$
$$
\ord_p(P'_n(\hat{a}))=\ord_p(P_{n,\ell_b^o}(\hat{a}))
=\frac{n(n+1)}{2d}+\frac{(1-\frac{s}{d})k_{r,n}}{p-1}.
$$
In summary, we have
$$\ord_p P''_n(\hat{a})=\ord_p P'_n(\hat{a})
=\frac{n(n+1)}{2d}+\frac{(1-\frac{s}{d})k_{r,n}}{p-1}.
$$
Thus for $p>2(d-s)k_{r,n}+1$ we have
$$\sum_{i=1}^{n}\frac{i}{d}=\frac{n(n+1)}{2d}\leq 
\ord_pP''_n(\hat{a})
< \frac{n(n+1)+1}{2d}.
$$
This verifies that the hypothesis of the tranform theorem 
in Section 5 of \cite{Zhu12} is satisfied, hence we are 
enabled to conclude that
$$
\NP(\bar{f})
=\NP_p(\sum_{n=0}^{d-1}P''_n(\hat{a}) T^n)
$$
and its breaking points after the origin are given by 
$$(n,\ord_p P''_n(\hat{a}))
=(n,\frac{n(n+1)}{2d}+\frac{(1-\frac{s}{d})k_{r,n}}{p-1})$$
for $n=1,\ldots,d-1$.

Conversely, suppose we know for such prime $p\equiv r\bmod d$ 
the breaking points of $\GNP(\A(s,d),\bar\F_p)$ are as given.
Then we may apply the transform lemma of \cite{Zhu12} 
and conclude that it is equal to 
$\NP_p(\sum_{n=0}^{d-1}P_n'(\hat{a})T^n)$, or 
in other words for all $1\leq n\leq d-1$ we have
$$\ord_p P'_n(\hat{a}) = \frac{n(n+1)}{2d}+\frac{(1-\frac{s}{d})k_{r,n}}{p-1}.$$
Then we apply Lemma \ref{L:approx} and find 
that $H_r$ has its lowest 
degree terms in the given form.

The last statement follows by taking limit.
\end{proof}

\begin{corollary}
\label{C:main2}
Let notation be as in Theorem \ref{T:Dwork}.
Suppose $H_r$ is nonzero with lowest degree terms
of the form $\sum_{n=1}^{d-1}h_{r,n,k}X_{r,n}^k$ for every $2\leq r\leq d-1$. 
Let $f=x^d+ax^s\in\A(s,d)(\bar\Q)$ with $d>s\geq 1$ coprime.
Then  for all prime $p>\max_r(N_{s,d,r},\MaxPrime(a))$ we have that 
\begin{eqnarray}
\NP(\bar{f}) &=&\GNP(\A(s,d);\bar\F_p)
\end{eqnarray}
and
$\lim_{p\rightarrow\infty}\NP(\bar{f}) =\HP(\A(s,d))$
if and only if $a\neq 0$.
\end{corollary}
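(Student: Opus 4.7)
The plan is to assemble Theorem~\ref{T:Dwork} uniformly over the residue classes $r = (p \bmod d)$. For a prime $p > d$ this residue ranges over the finite set $\{r : 1 \leq r \leq d-1,\ \gcd(r,d) = 1\}$. When $r \geq 2$, the hypothesis on $H_r$ together with $a \neq 0$ places us in the scope of Theorem~\ref{T:Dwork}, which yields $\NP(\bar{f}) = \GNP(\A(s,d); \bar\F_p)$ as soon as $p > \max(N_{s,d,r}, \MaxPrime(a))$. When $r = 1$, the three polygons $\NP(\bar{f})$, $\GNP(\A(s,d); \bar\F_p)$, and $\HP(\A(s,d))$ already coincide (as recorded in the introduction around (\ref{E:Katz}) and encoded by the convention $H_1 = 1$), so the equality is immediate. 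Taking the maximum over the finitely many residues produces the single threshold $\max_r(N_{s,d,r}, \MaxPrime(a))$ valid for every large prime simultaneously.

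For the limit statement, recall that the breaking points of $\GNP(\A(s,d); \bar\F_p)$ given by Theorem~\ref{T:Dwork} are $(n,\, \frac{n(n+1)}{2d} + \frac{(1-s/d)k_{r,n}}{p-1})$. The integers $k_{r,n}$ depend only on $(s,d,r,n)$ and are therefore uniformly bounded across the finite index set of admissible $(r,n)$. Hence the correction term tends to zero uniformly as $p \to \infty$, and the breaking points converge to $(n,\, \frac{n(n+1)}{2d})$, which are precisely those of $\HP(\A(s,d))$.

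For the converse implication, suppose $a = 0$, so that $\bar{f} = x^d$ is the pure monomial specialization. The Dwork matrix entries $F'_{pi-j}(\hat{a})$ collapse sharply at $\hat{a} = 0$: each term in the series carries the factor $A^{m_{ij}^\ell}$ and therefore vanishes unless $m_{ij}^\ell = 0$, which forces $\ell = 0$ and $m_{ij} = 0$, i.e., $j \equiv ri \bmod d$. The surviving matrix is essentially permutation-diagonal and its Fredholm determinant reduces to the classical Gauss-sum expansion of $L(x^d/\F_q; T)$ governed by Stickelberger's theorem. Under our hypothesis that $H_r$ is nonzero for some $r \geq 2$ the correction $k_{r,n}$ is strictly positive at some $(r,n)$, so $\GNP$ lies strictly above $\HP$ in that residue class; since $\NP(x^d)$ is controlled by Stickelberger digit sums rather than by the formula of Theorem~\ref{T:Dwork}, the two polygons disagree at infinitely many primes, breaking the first displayed equality.

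The hardest step is this converse: one must verify in detail that the Stickelberger polygon for $x^d$ does not accidentally match the $\GNP$ polygon read off from $H_r$ in every residue class. Because both polygons lie strictly above $\HP$ when $r \not\equiv 1 \bmod d$, the argument cannot rely on comparison with the Hodge polygon alone; it must quantify the offset $(1-s/d)k_{r,n}/(p-1)$ and check that it differs from the Stickelberger offset visible in the $\hat{a} = 0$ evaluation of the tame determinant of Lemma~\ref{L:monomial}.
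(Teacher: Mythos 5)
Your forward direction ($a\neq 0$) is the same decomposition-by-residue-class argument the paper uses, and the limit computation via the $\frac{(1-s/d)k_{r,n}}{p-1}$ correction term vanishing is exactly right. The overall structure matches the paper.

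The gap is in your converse ($a=0$), and you flag it yourself. You set out to show \emph{both} that $\NP(x^d)\neq\GNP(\A(s,d),\bar\F_p)$ and that the limit fails, and you observe (correctly) that verifying the first clause requires a careful comparison of the Stickelberger offset for $x^d$ against the offset $\frac{(1-s/d)k_{r,n}}{p-1}$ read off from $H_r$. But the corollary's conclusion is a \emph{conjunction} of two statements, so to negate it you only need one to fail. The paper's route is the cleaner one: by the Stickelberger theorem, $\NP(x^d)$ depends only on the residue class $r=(p\bmod d)$, i.e.\ it is a \emph{fixed} polygon within each class; for $r\equiv 1\bmod d$ it equals $\HP(\A(s,d))$, while for $2\leq r\leq d-1$ it lies strictly above $\HP(\A(s,d))$ (and a fortiori strictly above $\GNP(\A(s,d),\bar\F_p)$). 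Since the value is bounded away from $\HP$ on some residue classes and equal to $\HP$ on others, the limit $\lim_{p\to\infty}\NP(x^d)$ simply does not exist. No quantitative comparison with the $H_r$ offset is needed, because the offset $\frac{(1-s/d)k_{r,n}}{p-1}\to 0$ whereas the Stickelberger offset for $r\geq 2$ is a fixed positive rational, so the two can never agree for $p$ large anyway. You should replace your "accidental matching" concern with this one-line observation; as written, your converse is incomplete, while the paper's two-sentence Stickelberger argument closes the case.
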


\begin{proof}
Suppose $a\neq 0$ then the statement follows from 
Theorem \ref{T:Dwork}. If $a=0$ then $f=x^d$ 
and $\NP(\bar{f})$ is explicitly worked out by Stickelberger theorem (see {Wan04}). 
For $p\equiv 1\bmod d$ we have $\NP(\bar{f})=\HP(\A(s,d))$
but for $2\leq r\leq d-1$ we know  
$\NP(\bar{f})$ lies strictly above $\GNP(\A(s,d),\bar\F_p)$.
hence $\lim_{p\rightarrow\infty}\NP(\bar{f})$ does not exist.
\end{proof}

For any $s<d$ coprime integers and for any 
$q=p^c$ ($c\in\Z_{\ge 1}$), define 
$$
\GNP(\A(s,d),\F_q)
:=
\inf_{\bar{f}\in\A(s,d)(\F_q)}\NP(\bar{f}) 
$$
if exists.
Grothendieck-Katz specialization theorem implies that $\GNP(\A(s,d),\bar\F_p)$ exists.
Our proof of the main theorem implies the following statement immediately.

\begin{corollary}
Let notation be as in Theorem \ref{T:main}. For $p$ large enough, 
$\GNP(\A(s,d),\F_q)$ exists for any $p$-power $q$ and we have
$$
\GNP(\A(s,d),\F_q)=\GNP(\A(s,d),\bar\F_p).
$$
\end{corollary}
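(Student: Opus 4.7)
The plan is to deduce this corollary immediately from Theorem \ref{T:main} by exhibiting a single $\F_p$-rational member of the family whose Newton polygon already coincides with the generic one over $\bar\F_p$. The key observation is that Theorem \ref{T:main} is already stated globally: it applies to any $f = x^d + a x^s \in \A(s,d)(\bar\Q)$ with $a \neq 0$, so one is free to choose a particularly tame value of $a$, the simplest being $a = 1$.

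First I would handle the easy direction. Since $\F_q \subset \bar\F_p$ we have $\A(s,d)(\F_q) \subset \A(s,d)(\bar\F_p)$, so every Newton polygon arising on the left also arises on the right, and the infimum over the smaller ambient set (once its existence is secured) lies pointwise above or equals the infimum over the larger ambient set. In the order of the paper this reads
$$
\GNP(\A(s,d),\F_q) \;\prec\; \GNP(\A(s,d),\bar\F_p).
$$

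For the reverse inequality, and simultaneously for the existence of $\GNP(\A(s,d),\F_q)$, specialize Theorem \ref{T:main} to the particular polynomial $f_0(x) := x^d + x^s \in \A(s,d)(\Z) \subset \A(s,d)(\bar\Q)$. Here $a = 1$ is nonzero and $\MaxPrime(1) = 1$, so the hypothesis $p > \MaxPrime(a)$ is vacuous; hence, for every prime $p$ large enough in the sense of Theorem \ref{T:main}, we get
$$
\NP(\bar{f_0}) \;=\; \GNP(\A(s,d),\bar\F_p)
$$
(the residue class $r = 1$ is handled trivially via (\ref{E:Katz}), since there all three polygons coincide and $f_0$ automatically realizes $\HP$). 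Because $\bar{f_0} = x^d + x^s$ lies in $\A(s,d)(\F_p) \subset \A(s,d)(\F_q)$ for every $p$-power $q$, the set whose infimum defines $\GNP(\A(s,d),\F_q)$ actually contains the polygon $\GNP(\A(s,d),\bar\F_p)$. Thus the infimum is attained, which establishes existence and yields
$\GNP(\A(s,d),\bar\F_p) \prec \GNP(\A(s,d),\F_q)$; combined with the easy direction, this gives equality.

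I do not anticipate any real obstacle. The only mildly delicate points are verifying that $f_0$ is a legitimate closed point of $\A(s,d)$ (which is automatic since its leading coefficient is $1 \neq 0$) and noticing that the residue class $r = 1$ falls outside the $H_r$-machinery of Section 2 but is already settled by (\ref{E:Katz}). This is why the paper remarks that the statement is an immediate consequence of the main theorem: the content is simply that the generic polygon is realized over $\F_p$ itself by the most naive nonzero specialization of the parameter.
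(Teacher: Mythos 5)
Your proof is correct and is essentially the detail that the paper leaves implicit when it says the corollary ``follows immediately'' from the proof of the main theorem: one exhibits a single $\F_p$-rational specialization (here $a=1$) whose Newton polygon already equals $\GNP(\A(s,d),\bar\F_p)$, whence the infimum over $\A(s,d)(\F_q)$ is both bounded below by and attained at that polygon. Two small points worth making explicit: (i) you implicitly use that for $\bar f\in\A(s,d)(\F_p)$ the \emph{normalized} Newton polygon is base-change invariant, i.e.\ $\NP_q(L(\bar f/\F_q;T))=\NP_p(L(\bar f/\F_p;T))$ for any $q=p^c$ --- this is standard (the reciprocal roots over $\F_q$ are $c$-th powers of those over $\F_p$, and the normalization by $\log_p q$ exactly cancels the factor $c$), but it is the reason $\bar f_0$ regarded over $\F_q$ still has the polygon computed over $\F_p$; and (ii) the residue class $r\equiv 1\bmod d$ requires not just $\GNP=\HP$ from (\ref{E:Katz}) but also that $\bar f_0$ itself is ordinary, which is the classical Adolphson--Sperber/Wan fact for $p\equiv 1\bmod d$ (or, in the paper's framework, the degenerate case $H_1=1$, $k_{1,n}=0$). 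Neither point is a gap --- both are standard --- but spelling them out would make the argument airtight.
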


\begin{remark}\label{R:4.4}
The computation of $H_r$ starts with smallest $k\geq k_{r,n}^o$
and increases until we find the next term with $h_{r,n,k}\neq 0$.
When $s=1$ we have $H_r\neq 0$ 
with lowest degree term  
$\sum_{n=1}^{d-1}h_{r,n,k}X_{r,n}^{k_{r,n}^o}$ 
(it is shown in \cite{Zhu03}), namely it achieves its lowest possible degree.
But for $s\geq 2$ it is not always true that $H_r\neq 0$ with 
lowest degree term equal to $\sum_{n=1}^{d-1}h_{r,n,k}X_{r,n}^{k_{r,n}^o}$.
In fact in the case $(s,d)=(2,5)$ and $r=3$ one 
can show directly that $H_3$ has its least degree monomial of strictly higher degree
then $k_{r,n}^o$ for at least one $n$.
\end{remark}


\begin{thebibliography}{ZZZZZZZ}

\bibitem[AS87]{AS87}
{\sc A. Adolphson; Steve Sperber:}
Newton polyhedra and the degree of the
$L$-function associated to an exponential sum,
{\it Invent. Math.} {\bf 88} (1987), 555-569.

\bibitem[AS89]{AS89}
{\sc A. Adolphson; Steve Sperber:}
Exponential sums and Newton polyhedra:
Cohomology and estimates, 
{\it Ann. of Math.} {\bf 130}
(1989), 367--406.

\bibitem[BFZ08]{BFZ08}
{\sc Regis Blache, Eric Ferard, Hui June Zhu:}
Hodge-Stickelberger polygons for $L$-functions of exponential sums 
of $P(x^s)$.
{\it Math. Research Letters}, {\bf 15}, issue 5, 
September 2008, 1053--1071.

\bibitem[Katz]{Katz}
{\sc Nicholas Katz:}
Sommes exponentielles, 
{\it Ast\'erisque}, {\bf 79} (1980). 

\bibitem[Kob84]{Kob84}
{\sc Neal Koblitz:}
$p$-adic numbers, $p$-adic 
analysis and Zeta-functions.
Graduate Texts in Mathematics {\bf 58}.
Springer-Verlag. 1984.

\bibitem[Wan93]{Wan93}
{\sc Daqing Wan:}
Newton polygons and zeta functions and $L$ functions, 
{\it Ann. of Math.} {\bf 37} (1993), 249--293.

\bibitem[Wan04]{Wan04}
{\sc Daqing Wan:}
Variations of $p$-adic $L$ functions for exponential sums.
{\it Asian J. Math.} {\bf 8} (2004), 427--470.

\bibitem[Zhu03]{Zhu03}
{\sc Hui June Zhu:}
$p$-adic variation of $L$ functions of one
variable exponential sums, {\it Amer. J. Math.}
{\bf 125} (2003), 669--690.

\bibitem[Zhu04]{Zhu04}
{\sc Hui June Zhu:}
Asymptotic variation of $L$ functions of one-variable exponential sums.
{\it J. Reine Angew. Math.} 572 (2004), 219--233.

\bibitem[Zhu12]{Zhu12}
{\sc Hui June Zhu;}
Asymptotic variations of $L$-functions of exponential sums. Preprint.
{\tt arXiv:1211.5875.}

\end{thebibliography}
\end{document}